\newtheorem{theorem}{Theorem}[section]
\newtheorem{corollary}{Corollary}
\newtheorem{lemma}[theorem]{Lemma}
\newtheorem{proposition}{Proposition}
\theoremstyle{definition}
\newtheorem{remark}{Remark}
\newcommand{\eps}[1]{{#1}_{\varepsilon}}
\def\EE{{\mathbb E}}
\def\FF{{\mathbb F}}
\def\R{{\mathbb R}}
\def\DD{{\mathscr D}}
\def\LL{{\mathscr L}}
\def\KK{{\mathscr K}}
\def\N{{\mathbb N}}
\def\C{{\mathbb C}}
\def\Z{{\mathbb Z}}
\def\E{{\mathcal E}}
\def\F{{\mathcal F}}
\def\K{{\mathcal K}}
\def\HH{{\mathcal H}}
\def\T{{\mathcal T}}
\def\L{{\mathcal L}}
\def\W{W}
\def\RR{{\mathcal R}}
\def\T{{\mathcal T}}
\def\eps{\varepsilon}
\def\3{{\ss}}
\def\HIC{$\HH^\infty$-calculus}
\def\BUC{{\mathrm{BUC}}}
\def\sg{{\mathrm{sg}}}
\def\dd{{\mathrm d}}
\def\e{{\mathrm{e}}}
\def\heta{\hat{\eta}}
\def\hf{\hat{f}}
\def\hh{\hat{h}}
\def\hg{\hat{g}}
\def\dR{\dot{\R}}
\def\th{\tilde{h}}
\def\tg{\tilde{g}}
\def\teta{\tilde{\eta}}
\def\bareta{\bar{\eta}}
\def\del{\delta}
\def\rcvtp{\W^1_p(J,L^p(\dR^{n+1}))
        \cap L_p(J,\W^2_p(\dR^{n+1}))}
\def\rcs{W^{3/2-1/2p}_p(J,L_p(\R^n))
        \cap \W^1_p(J,W^{1-1/p}_p(\R^n))
        \cap L_p(J,W^{2-1/p}_p(\R^n))}
\def\rcftp{L_p(J,L_p(\dR^{n+1}))}
\def\rcg{W^{1-1/2p}_p(J,L_p(\R^n))
        \cap L_p(J,W^{2-1/p}_p(\R^n))}
\def\rch{W^{1/2-1/2p}_p(J,L_p(\R^n))
        \cap L_p(J,W^{1-1/p}_p(\R^n))}
\def\rcvitp{W^{2-2/p}_p(\dR^{n+1})}
\def\rcsi{W^{2-2/p}_p(\R^n)}
\def\rcsisu{W^{4-3/p}_p(\R^n)}
\def\rcss{W^{3/2-1/2p}_p(J,L_p(\R^n))
        \cap \W^{1-1/2p}_p(J,W^2_p(\R^n))
        \cap L_p(J,W^{4-1/p}_p(\R^n))}
\def\rcsu{W^{2-1/2p}_p(J,L_p(\R^n))
        \cap W^1_p(J,W^{2-1/p}_p(\R^n))}
\def\rcssu{W^{2-1/2p}_p(J,L_p(\R^n))
        \cap L_p(J,W^{4-1/p}_p(\R^n))}
\title[Singular limits for the two-phase Stefan]
      {Singular limits for the two-phase Stefan problem}
\author[Jan Pr{\tiny\"u}{\ss}, J{\tiny\"u}rgen Saal, and Gieri Simonett]{}
\subjclass{Primary: 35R35, 35B65, 80A22; Secondary: 35K20.}
 \keywords{Stefan problem, free boundary problem, phase transition,
 singular limits, maximal regularity.}
\email{jan.pruess@mathematik.uni-halle.de}
\email{saal@csi.tu-darmstadt.de}
\email{gieri.simonett@vanderbilt.edu}
\thanks{The second author is supported by the Center of Smart Interfaces
at TU Darmstadt}
\begin{document}
\maketitle

\centerline{\emph{Dedicated to Jerry Goldstein 
on the occasion of his 70th anniversary}}
\vspace{1cm}

\centerline{\scshape Jan Pr\"u{\ss}}
\medskip
{\footnotesize
 \centerline{Martin-Luther-Universit\"at Halle-Witten\-berg}
   \centerline{Institut f\"ur Mathematik}
   \centerline{D-06120 Halle, Germany}
} 

\medskip

\centerline{\scshape J\"urgen Saal}
\medskip
{\footnotesize
 \centerline{Technische Universit\"at Darmstadt}
   \centerline{Center of Smart Interfaces}
   \centerline{64287 Darmstadt, Germany}
}

\medskip

\centerline{\scshape Gieri Simonett}
\medskip
{\footnotesize
 \centerline{Vanderbilt University}
   \centerline{Department of Mathematics}
   \centerline{Nashville, TN~37240, USA}
}

\bigskip

 \centerline{(Communicated by the associate editor name)}

\begin{abstract}
We prove strong convergence to singular limits for a linearized
fully inhomogeneous Stefan problem subject to surface tension and kinetic
undercooling effects. Different combinations of $\sigma \to \sigma_0$
and $\delta\to\delta_0$, where $\sigma,\sigma_0\ge 0$ and 
$\delta,\delta_0\ge 0$ denote surface tension and kinetic undercooling 
coefficients respectively, altogether lead to five different types
of singular limits. Their strong convergence is based on uniform
maximal regularity estimates.
\end{abstract}


\section{Introduction}

The aim of this note is to consider the fully inhomogeneous 
system 
\begin{equation}
\label{LTS}
        \left\{
        \begin{array}{r@{\quad=\quad}ll}
        (\partial_t-c\Delta)v
         & f
        & \mbox{in}\ J\times\dR^{n+1},\\
        \gamma v^\pm -\sigma\Delta_x\rho+\del\partial_t\rho& g 
	& \mbox{on}\ J\times\R^n,\\
        \partial_t\rho+[\![c\partial_y(v-a\rho_E)]\!] & h
        & \mbox{on}\ J\times\R^n,\\
        v(0) & v_0 & \mbox{in}\ \dR^{n+1},\\
        \rho(0) & \rho_0 & \mbox{in}\ \R^n,\\
        \end{array}
        \right.
\end{equation}
which represents a linear model problem for the two-phase Stefan problem
subject to surface tension and kinetic undercooling effects.
Here 
\[
v(t,x,y)=
\left\{
\begin{array}{rl}
v^+(t,x,y),&y>0,\\
v^-(t,x,y),&y<0,
\end{array}
\right.
\qquad x\in\R^{n},\ y\in\R\setminus\{0\},\ t\in J,
\]
denotes the temperature in the two bulk phases
$\R^{n+1}_\pm=\{(x,y);\ x\in\R^n,\  \pm y>0\}$, 
and we have set $\dR^{n+1}=\R^{n+1}_+\cup\R^{n+1}_-$
and $J=(0,T)$.
The function $\rho$ appearing in the boundary 
conditions describes the free interface, 
which is assumed to be given
as the graph of $\rho$. We also admit the possibility of two
different (but constant) diffusion coefficients $c_\pm$ in the 
two bulk phases.
The parameters $\sigma$ and $\delta$ are related to surface tension and
kinetic undercooling. The function $\rho_E$ is an extension
of $\rho$ chosen suitably for our purposes. Here it is always
determined through 
\begin{equation}
\label{2-rho_E}
    \left\{
        \begin{array}{r@{\quad=\quad}ll}
        (\partial_t-c\Delta)\rho_E 
                &0
        & \mbox{in}\ J\times\dR^{n+1},\\
        \gamma \rho_E^{\pm} & \rho & \mbox{on}\ J\times\R^n,\\
        \rho_E(0)  & e^{-|y|(1-\Delta_x)^{\frac12}}\rho_0
        & \mbox{in}\ \dR^{n+1}.\\
        \end{array}
        \right.
\end{equation}

Using this notation, let $[\![c\partial_y (v-\rho_E)]\!]$ 
denote the jump of the normal derivatives 
across $\R^n$, that is,
$$[\![c\partial_y (v-\rho_E)]\!]:=c_+\gamma\partial_y (v^+-\rho_E^+)
-c_-\gamma\partial_y (v^- -\rho_E^-),$$
where $\gamma$ denotes the trace operator.
The coefficient $a$ is supposed to be a function of $\delta$ and
$\sigma$, that is, $a_\pm:[0,\infty)^2\to\R$, 
$[(\del,\sigma)\mapsto a_\pm(\del,\sigma)]$. It is further assumed to 
satisfy the conditions
\begin{equation}\label{cont_cond_a}
	a_\pm\in C([0,\infty)^2,\R),
	\qquad\quad
	a_\pm(0,0)>0.
\end{equation}
Recall from \cite{PSS07} that the introduction of the additional
term '$a\rho_E$' with $a_\pm>0$ in the situation of the 
classical Stefan problem is motivated by
the following two facts: for suitably chosen $a$ (depending on
the trace of the initial value and $\partial_y\rho_E$) it can be guaranteed
that a certain nonlinear term remains small for small times.
On the other hand, the additional term '$a\rho_E$'
is exactly the device that renders sufficient regularity
for the linearized problem. Note that, concerning regularity,
this additional term is not required if surface tension or 
kinetic undercooling is present. However, in order to obtain
convergence in best possible regularity classes for the limit 
$\sigma,\delta \to0$, we keep the term
'$a\rho_E$' in all appearing systems.
Since the data may (in general even must; see Remark~\ref{clarirem})
depend on $\sigma$ and $\del$  
as well, $a$ is a function of these two parameters. The natural 
and necessary convergence
assumption \eqref{conv_ass_inhom_1} then implies that we can assume
that $a_\pm\in C([0,\infty)^2,\R)$.
This continuity will be important in deriving maximal
regularity estimates for related boundary operators; see the proof of
Proposition~\ref{reg_op_l}.

The results of this paper on system (\ref{LTS}) 
represent an essential step in the
treatment of singular limits for the nonlinear Stefan
problem on general geometries. This will be the topic of
a forthcoming paper.

To formulate our main results, 
let $W^s_p(\R^n)$, $s\ge 0$, $p\in(1,\infty)$, denote the 
Sobolev-Slobodeckij spaces, cf. \cite{Tri83} 
(see also Section~\ref{sec_lin_op}). 
Depending on the presence of surface tension and/or kinetic
undercooling we obtain different regularity classes for
$\rho$, the function describing the evolution of the free
interface. To formulate this in a 
precise way we define for $J=(0,T)$ and $\del,\sigma\ge0$,
\begin{equation}\label{def_param_dep_space}
	\EE^2_T(\del,\sigma)
	:=\left\{\rho\in\EE^2_T(0,0):
	   \del\|\rho\|_{\EE^2_T(1,0)}
	   +\sigma\|\rho\|_{\EE^2_T(0,1)}<\infty\right\},	
\end{equation}
equipped with the norm
\begin{equation}\label{norm_param_dep_space}
	\|\cdot\|_{\EE^2_T(\del,\sigma)}
	:= \|\cdot\|_{\EE^2_T(0,0)}
	   +\del\|\cdot\|_{\EE^2_T(1,0)}
	   +\sigma\|\cdot\|_{\EE^2_T(0,1)},
\end{equation}
and where
\begin{eqnarray*}
	\EE^2_T(0,0)\hspace{-2mm}&:=&\hspace{-2mm}\rcs,\\
	\EE^2_T(1,0)\hspace{-2mm}&:=&\hspace{-2mm}\rcsu,\\
	\EE^2_T(0,1)\hspace{-2mm}&:=&\hspace{-2mm}\rcss,
\end{eqnarray*}
equipped with their canonical norms.
For the different values of
$\del$ and $\sigma$ (i.e., $\del=\sigma=0$, or $\del>0 $ and $\sigma=0$,
or $\del=0$ and $\sigma>0$, or $\del$ and $\sigma>0$) we obtain 
four different regularity classes for $\rho$. 
This leads to the following
five types of singular limits for problem (\ref{LTS}):
\begin{itemize}
\item[(1)]$(\del,\sigma)\to (0,0)$, \ \ \  $\del,\sigma>0$,
\smallskip
\item[(2)]$(\del,\sigma)\to (\del_0,0)$, \ \ for $\del_0>0$ fixed,
\smallskip
\item[(3)]$(\del,\sigma)\to (0,\sigma_0)$, \ \ for $\sigma_0>0$ fixed, 
\smallskip
\item[(4)]$(\del,0)\to (0,0)$,
\smallskip
\item[(5)]$(0,\sigma)\to (0,0)$.
\end{itemize}
Our main result, Theorem~\ref{conv_main_lin}, 
covers convergence results for all these limits.

In the sequel 
\[
	\sg(t):=\left\{\begin{array}{rl}
	               1,&t>0,\\
		       0,& t=0,\\
	               -1,&t<0,
		       \end{array}
		       \right.
\]
will denote the sign function. Our first main result is on 
maximal regularity.
Here we refer to Section 2 for the definition of the space of data $\FF_T(\del,\sigma)$.
The essential difference to corresponding results in previous publications is the uniformness 
of the estimates with respect to the parameters $\del$ and $\sigma$. 
\bigskip
\begin{theorem}
\label{main_lin}
Let $3<p<\infty$, $R,T>0$, $0\le\del,\sigma\le R$,
and suppose that $a=a(\del,\sigma)$ is a function 
satisfying the conditions in \eqref{cont_cond_a}.
There exists a unique solution
\[
	(v,\rho,\rho_E)=(v^{(\del,\sigma)},\rho^{(\del,\sigma)},\rho^{(\del,\sigma)}_E)\in \EE_T(\del,\sigma)
\]
for \eqref{LTS}--\eqref{2-rho_E} if and only if
the data satisfy
\begin{equation}\label{assum_ml_a}
        (f,g,h,v_0,\rho_0)\in \FF_T(\del,\sigma),
\end{equation}
\begin{equation}\label{assum_ml_b}
        \gamma v_0^{\pm}-\sigma\Delta_x\rho_0
	      +\del\left(h(0)-[\![c\gamma\partial_y
	      (v_0-a\e^{-|y|(1-\Delta_x)^{1/2}}\rho_0)]\!]\right)
	      =g(0),
\end{equation}
and, if $\del=0$, also that 
\begin{equation}\label{assum_ml_c}
	      \sigma(h(0)-[\![c\gamma\partial_y v_0]\!])
	      \in W^{2-6/p}_p(\R^n).
\end{equation}
Furthermore, the solution satisfies the estimate
\begin{eqnarray}
	\|(v,\rho,\rho_E)\|_{\EE_T(\del,\sigma)}
	&\le& C\left(
	\|(f,g,h,v_0,\rho_0)\|_{\FF_T(0,0)}
	+(\del+\sigma)\|\rho_0\|_{W^{4-3/p}_p(\R^n)}
	\right.\nonumber\\
	&&\left.\strut+
	\sigma\|h(0)-[\![c\gamma\partial_y v_0]\!]\|_{W^{2-6/p}_p(\R^n)}
	\right),
	\label{max_reg_ineq_inhom}
\end{eqnarray}
where the constant $C>0$ is independent of $(\del, \sigma)\in [0,R]^2$. 
\end{theorem}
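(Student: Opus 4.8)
The plan is to establish the theorem by the standard Laplace--Fourier approach to maximal regularity, with the essential refinement that every estimate is tracked uniformly in $(\del,\sigma)\in[0,R]^2$. First I would reduce to the case of trivial initial data. Solving the two decoupled half-space heat problems for $v$ with initial value $v_0$, together with the problem \eqref{2-rho_E} defining $\rho_E$ from $\rho_0$, one subtracts off these contributions and is left with a system of the form \eqref{LTS} in which $v_0=0$ and $\rho_0=0$. The price of this reduction is precisely the compatibility conditions: matching the zeroth-order time trace of the second boundary equation yields \eqref{assum_ml_b}, and, when $\del=0$, placing the reduced boundary data in the correct trace class forces the additional regularity requirement \eqref{assum_ml_c}. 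Moreover the extra terms $(\del+\sigma)\|\rho_0\|_{W^{4-3/p}_p(\R^n)}$ and $\sigma\|h(0)-[\![c\gamma\partial_y v_0]\!]\|_{W^{2-6/p}_p(\R^n)}$ in \eqref{max_reg_ineq_inhom} are exactly the norms produced by carrying $\rho_0$ into the parameter-dependent class $\EE^2_T(\del,\sigma)$ during this step. The necessity (``only if'') direction is handled in the same breath: any solution in $\EE_T(\del,\sigma)$ has traces lying in the corresponding data spaces, which forces $(f,g,h,v_0,\rho_0)\in\FF_T(\del,\sigma)$ together with the two compatibility relations.

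For the reduced problem with zero initial data I would apply the Laplace transform in $t$ (with spectral parameter $\lambda$ in a sector) and the Fourier transform in $x$ (dual variable $\xi$). The bulk equation becomes, in each half-space, the ODE $(\lambda+c_\pm|\xi|^2-c_\pm\partial_y^2)\hv^\pm=\hf^\pm$, whose decaying solutions are governed by the symbols $\omega_\pm=\sqrt{(\lambda+c_\pm|\xi|^2)/c_\pm}$. Inserting the resulting ansatz into the two transformed boundary equations and into the transformed problem \eqref{2-rho_E} for $\rho_E$ yields a small linear system for the unknown boundary amplitudes and $\hat\rho$. Solving it by Cramer's rule produces explicit solution symbols, whose denominator is the Lopatinskii--Shapiro determinant; here $\sigma|\xi|^2$ and $\del\lambda$ enter additively, and the first task is to show that this determinant is bounded below uniformly in $(\del,\sigma)$ on the relevant sector. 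This is the point where the hypothesis $a_\pm(0,0)>0$ from \eqref{cont_cond_a} is used, since it is what keeps the determinant nondegenerate in the degenerate limit $\del=\sigma=0$.

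The main work, and the principal obstacle, is to show that these solution symbols define bounded operators from the data space $\FF_T(\del,\sigma)$ into the solution space $\EE_T(\del,\sigma)$ with operator norm independent of $(\del,\sigma)$. I would verify the hypotheses of the operator-valued Mikhlin multiplier theorem in the UMD space $L_p$, establishing $\RR$-bounds for the symbols and their derivatives that are uniform in the parameters. The parameter-dependent norms \eqref{def_param_dep_space}--\eqref{norm_param_dep_space} are designed precisely so that the contributions $\sigma\Delta_x\rho$ and $\del\partial_t\rho$ in the boundary operator are absorbed with uniform constants, while the continuity \eqref{cont_cond_a} of $a_\pm$ furnishes the uniform control of the coupling term $a\rho_E$, which is the content of Proposition~\ref{reg_op_l}. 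Combining these uniform symbol bounds with the initial-data reduction then yields simultaneously the solvability and the estimate \eqref{max_reg_ineq_inhom} with a constant independent of $(\del,\sigma)\in[0,R]^2$. The hard part will be the parameter-uniform $\RR$-boundedness across the four regimes $\del=\sigma=0$, $\del>0$, $\sigma>0$, and $\del,\sigma>0$ simultaneously, where one must keep the highest-order boundary terms in $\rho$ under control as they degenerate.
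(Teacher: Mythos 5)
Your overall strategy coincides with the paper's: a reduction to vanishing time traces followed by a Laplace--Fourier analysis of the boundary symbol, with parameter-uniform bounds obtained from the positivity $a_\pm(0,0)>0$ and an $\RR$-bounded functional calculus (the paper uses the Kalton--Weis operator-sum theorem where you invoke operator-valued Mikhlin; this is an inessential difference). The symbol analysis you describe is exactly the content of the paper's Theorem~\ref{homog_ini} and Proposition~\ref{reg_op_l}, and your identification of where $a_\pm(0,0)>0$ enters is correct.

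The one step that is genuinely under-specified, and as literally written would fail, is the reduction itself. Solving half-space heat problems with initial value $v_0$ and solving \eqref{2-rho_E} ``from $\rho_0$'' does not make $\rho_0$ vanish: \eqref{2-rho_E} defines $\rho_E$ from the unknown $\rho$, not from the data, and nothing in your reduction subtracts an extension of $\rho_0$ itself. The paper's device is to construct (Lemma~\ref{ext_ext}) a function $\rho_1\in\EE^2_T(\del,\sigma)$ with the \emph{two} prescribed traces $\rho_1(0)=\rho_0$ and $\partial_t\rho_1(0)=h(0)-[\![c\gamma\partial_y(v_0-a\e^{-|y|(1-\Delta_x)^{1/2}}\rho_0)]\!]$, the second trace being forced by evaluating the Stefan condition at $t=0$; only after subtracting $(v_1,\rho_1,\rho_{1,E})$ does the remaining right-hand side lie in ${_0\FF^2_T}\times{_0\FF^3_T}$, so that the zero-trace theorem applies. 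This simultaneous extension is precisely where condition \eqref{assum_ml_c} is used (to place $\partial_t\rho_1(0)$ in the trace class $W^{2-6/p}_p(\R^n)$ of $\EE^2_T(0,1)$ when $\del=0<\sigma$) and where the terms $(\del+\sigma)\|\rho_0\|_{W^{4-3/p}_p(\R^n)}$ and $\sigma\|h(0)-[\![c\gamma\partial_y v_0]\!]\|_{W^{2-6/p}_p(\R^n)}$ in \eqref{max_reg_ineq_inhom} actually originate; without prescribing the first-order time trace of $\rho_1$, the reduced datum $h-\partial_t\rho_1-[\![c\gamma\partial_y(v_1-a\rho_{1,E})]\!]$ would not vanish at $t=0$. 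A second, smaller omission: the $v$-reduction must also correct the Dirichlet trace (the paper adds $\e^{-(1-\Delta_x)t}(\gamma v_0-g(0))$ to the boundary datum of the auxiliary heat problem) so that the reduced $g$-datum has zero trace at $t=0$; a pure initial-value solve for $v$ does not achieve this.
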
 
\noindent
Our main result on convergence of singular limits is
\begin{theorem}\label{conv_main_lin}
Let $3<p<\infty$, $R,T>0$, $0\le\del_0\le\del\le R$,
$0\le\sigma_0\le\sigma\le R$, and  $a=a(\del,\sigma)$ be a function 
satisfying the conditions in \eqref{cont_cond_a}. 
Set $\mu:=(\del,\sigma)$,
$\mu_0:=(\del_0,\sigma_0)$, and 
$I_0:=[\del_0,R]\times[\sigma_0,R]$. Suppose that
\[
	((f^\mu,g^\mu,h^\mu,v_0^\mu,\rho_0^\mu))_{\mu\in I_0}
	\subset\FF_T(\mu)
\]
and that the compatibility conditions \eqref{assum_ml_b}
and \eqref{assum_ml_c} in 
Theorem~\ref{main_lin} are satisfied for each 
$\mu\in I_0$. 
Furthermore, denote 
by $(v^\mu,\rho^\mu,\rho_E^\mu)$ the 
solution of \eqref{LTS}--\eqref{2-rho_E} 
given in Theorem~\ref{main_lin} that corresponds to the
parameter $\mu=(\del,\sigma)\in I_0$.
Under the convergence assumptions that 
\begin{equation}
\label{conv_ass_inhom_1}
	(f^\mu,g^\mu,h^\mu,v_0^\mu,\rho_0^\mu)
	\to (f^{\mu_0},g^{\mu_0},h^{\mu_0},v_0^{\mu_0},
	\rho_0^{\mu_0})\quad\mbox{in}\quad\FF_T(\mu_0),
\end{equation}
and, if $\del_0=0$, that
\begin{equation}
\label{conv_ass_inhom_2}
	\sigma\left(h^\mu(0)-[\![c\gamma\partial_y v_0^\mu ]\!]\right)
	\to \sigma_0\left(
	h(0)^{\mu_0}-[\![c\gamma\partial_y v_0^{\mu_0}]\!]\right)
	\quad\mbox{in}\quad W^{2-6/p}_p(\R^n)
\end{equation}
and, if $\del_0=\sigma_0=0$ and $\del>0$,
also that
\begin{equation}
\label{conv_ass_inhom_3}
	(\del+\sigma) \rho_0^\mu
	\to 0
	\quad\mbox{in}\quad W^{4-3/p}_p(\R^n)
\end{equation}
on the data,
we obtain strong convergence of the solution, i.e.,
we have that
\begin{equation}\label{conv_inhom_lin}
	(v^\mu,\rho^\mu,\rho_E^\mu)\to
	(v^{\mu_0},\rho^{\mu_0},\rho_E^{\mu_0})
	\quad\mbox{in}\quad\EE_T(\mu_0).
\end{equation}
\end{theorem}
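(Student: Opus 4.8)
The plan is to exploit linearity together with the uniform maximal regularity estimate of Theorem~\ref{main_lin}. First I would form the difference $z:=(v^\mu-v^{\mu_0},\rho^\mu-\rho^{\mu_0},\rho_E^\mu-\rho_E^{\mu_0})$. Since $\mu\ge\mu_0$ componentwise, the definition \eqref{norm_param_dep_space} gives $\|\cdot\|_{\EE_T(\mu_0)}\le\|\cdot\|_{\EE_T(\mu)}$ and hence $\EE_T(\mu)\hookrightarrow\EE_T(\mu_0)$, so $z\in\EE_T(\mu_0)$. Subtracting the two copies of \eqref{LTS}--\eqref{2-rho_E} shows that $z$ solves the problem at the \emph{fixed} parameter $\mu_0$, with data equal to the differences of the original data plus three cross terms coming from the mismatch of the operators and of $a$: the $g$-line acquires $(\sigma-\sigma_0)\Delta_x\rho^\mu-(\del-\del_0)\partial_t\rho^\mu$, and the $h$-line acquires $[\![c\partial_y((a(\mu)-a(\mu_0))\rho_E^\mu)]\!]$. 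Because $z$ is a genuine difference of solutions, these modified data automatically lie in $\FF_T(\mu_0)$ and satisfy the $\mu_0$-compatibility conditions, so Theorem~\ref{main_lin} applies to $z$ at $\mu_0$ and bounds $\|z\|_{\EE_T(\mu_0)}$ by the $\FF_T(0,0)$-norm of the modified data together with the two parameter-weighted terms in \eqref{max_reg_ineq_inhom} evaluated at $\mu_0$.

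The routine part is then to show that each term on the right converges to zero. The honest difference terms $f^\mu-f^{\mu_0}$, and so on, tend to zero by \eqref{conv_ass_inhom_1} and the embedding $\FF_T(\mu_0)\hookrightarrow\FF_T(0,0)$, while the weighted terms $(\del_0+\sigma_0)\|\rho_0^\mu-\rho_0^{\mu_0}\|_{W^{4-3/p}}$ and $\sigma_0\|\cdots\|_{W^{2-6/p}}$ are matched exactly by \eqref{conv_ass_inhom_2}--\eqref{conv_ass_inhom_3}. The $a$-cross term is handled by the continuity \eqref{cont_cond_a}: $|a(\mu)-a(\mu_0)|\to0$ multiplies the normal-derivative trace of $\rho_E^\mu$, which is bounded in $\rch$ uniformly in $\mu$ by Theorem~\ref{main_lin} applied at $\mu$. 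For the $g$-cross terms I would use the uniform a priori bound $\del\|\rho^\mu\|_{\EE^2_T(1,0)}+\sigma\|\rho^\mu\|_{\EE^2_T(0,1)}\le\|\rho^\mu\|_{\EE^2_T(\mu)}\le C$, again from Theorem~\ref{main_lin} at $\mu$. When $\sigma_0>0$ (resp.\ $\del_0>0$) this is immediate: writing $(\sigma-\sigma_0)\Delta_x\rho^\mu=\tfrac{\sigma-\sigma_0}{\sigma}\,\sigma\Delta_x\rho^\mu$ and using $\sigma\ge\sigma_0>0$ makes the prefactor tend to zero against a factor bounded in $\rcg$.

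The hard part is precisely the critical directions $\sigma_0=0$ and/or $\del_0=0$. There $\sigma\Delta_x\rho^\mu$ (resp.\ $\del\partial_t\rho^\mu$) is only \emph{bounded} in $\rcg$, not small; a Fourier analysis of the reduced Stefan operator shows that its high-frequency part does not vanish as $\sigma\to0$, so the crude estimate of the modified data in $\FF_T(0,0)$ cannot converge to zero. Moreover the second boundary relation identifies this cross term with $\gamma v^\mu-g^\mu$ when $\mu_0=(0,0)$, which renders the naive bound circular. The resolution I would pursue rests on the constant-coefficient, flat-interface structure: after the standard reduction, \eqref{LTS}--\eqref{2-rho_E} is solved explicitly by operator-valued Fourier--Laplace multipliers whose symbols depend continuously on $(\del,\sigma)$ and on $a(\del,\sigma)$. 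The content of Theorem~\ref{main_lin} is the \emph{uniform} $\mathcal{R}$-boundedness of these symbols over $\mu\in I_0$ (this is where the continuity of $a$ is used, cf.\ the discussion preceding Theorem~\ref{main_lin}), while as $\mu\to\mu_0$ the symbols converge pointwise to those at $\mu_0$.

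Strong convergence of $z$ to $0$ in $\EE_T(\mu_0)$ then follows from a dominated-convergence principle for uniformly $\mathcal{R}$-bounded multiplier families with pointwise convergent symbols, applied to the difference. The singular surface-tension and undercooling contributions are exactly the components whose symbols stay $O(1)$ at high frequency but whose \emph{difference} symbols converge pointwise to zero; the weighted assumptions \eqref{conv_ass_inhom_2}--\eqref{conv_ass_inhom_3}, reconciled with the compatibility conditions \eqref{assum_ml_b}--\eqref{assum_ml_c} (which force, e.g., $\sigma\Delta_x\rho_0^\mu=\gamma v_0^\mu-g^\mu(0)\to0$), guarantee that the correspondingly weighted initial data is admissible and convergent in the multiplier estimate. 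The main obstacle is thus to upgrade the \emph{weak} convergence---immediate because each singular symbol carries a vanishing scalar factor against a uniformly bounded operator---to \emph{strong} convergence, and this is achieved by the dominated-convergence/$\mathcal{R}$-boundedness argument rather than by any term-by-term smallness in $\rcg$.
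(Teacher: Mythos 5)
Your setup---differencing the two solutions, reading off the cross terms $(\sigma-\sigma_0)\Delta_x\rho^\mu-(\del-\del_0)\partial_t\rho^\mu$ and $[\![c\gamma\partial_y((a(\mu)-a(\mu_0))\rho_E^\mu)]\!]$, and invoking the uniform estimate of Theorem~\ref{main_lin}---is sound, and you correctly isolate the real difficulty: in the critical directions $\sigma_0=0$ or $\del_0=0$ the cross terms are only \emph{bounded} in $\FF^2_T$, not small. But the resolution you offer, a ``dominated-convergence principle for uniformly $\RR$-bounded multiplier families'' applied to the difference of the full solutions, has a genuine gap. The full inhomogeneous solution $\rho^\mu$ is \emph{not} given by a Fourier--Laplace multiplier acting on the data: the initial traces $\rho_0^\mu$, $v_0^\mu$ are nonzero, so the symbol representation (and with it any symbol-convergence argument) is only available after the problem has been reduced to zero time traces. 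The paper therefore splits $\rho^\mu=\rho_1^\mu+\rho_2^\mu$, where $\rho_1^\mu$ is an explicit extension of the traces $(\rho_0^\mu,q_0^\mu)$ built in Lemma~\ref{ext_ext} and $\rho_2^\mu$ solves the reduced zero-trace problem. For the reduced part your mechanism is exactly right and is what Corollary~\ref{conv_op_l} delivers: $(\sigma-\sigma_0)L_\mu^{-1}\to0$ strongly, proved by the elementary scheme ``uniform bound over $[\del_0,R]\times[\sigma_0,R]$ plus convergence on the dense subspace $\DD(F_+^3)$,'' not by a new dominated-convergence theorem.

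What your proposal does not supply is the treatment of $\sigma\Delta_x\rho_1^\mu$ and $\del\partial_t\rho_1^\mu$ in the critical cases. These are controlled by $\sigma\|\rho_0^\mu\|_{W^{4-3/p}_p}+\sigma\|q_0^\mu\|_{W^{2-6/p}_p}$ and $\del\|\rho_0^\mu\|_{W^{4-3/p}_p}+\del\|q_0^\mu\|_{W^{2-3/p}_p}$, and showing that these tend to zero is precisely where the compatibility condition \eqref{assum_ml_b} (used to trade $\del\,q_0^\mu$ for $g^\mu(0)-\gamma v_0^\mu+\sigma\Delta_x\rho_0^\mu$) and the hypotheses \eqref{conv_ass_inhom_2} and \eqref{conv_ass_inhom_3} are actually consumed; the paper needs a three-way case distinction ($\del_0=\sigma_0=0$; $\del_0>0$; $\del_0=0<\sigma_0$) to carry this out. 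Your remark that the weighted terms are ``matched exactly'' by \eqref{conv_ass_inhom_2}--\eqref{conv_ass_inhom_3} refers to the weighted terms of \eqref{max_reg_ineq_inhom} evaluated at $\mu_0$, which mostly vanish in the critical case $\mu_0=(0,0)$ and are not where these hypotheses do their work. So the skeleton of your argument is right and its zero-trace half coincides with the paper's, but the step that carries most of the weight of the proof---the convergence of the weighted norms of the trace-extension part---is missing.
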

\bigskip
\begin{remark}\label{clarirem}
(a) \ Note that for $\del>0$ condition 
\eqref{assum_ml_c} follows 
automatically from condition \eqref{assum_ml_b}.\\[1mm]
(b) \ Conditions \eqref{conv_ass_inhom_1} and \eqref{conv_ass_inhom_3}
for the last component
are obviously satisfied for a fixed initial interface
in $\rcsisu$, i.e., if we assume 
$\rho_0^\mu=\rho_0\in \rcsisu$ for all $\mu\in I_0$.
But observe that, due to condition \eqref{assum_ml_b},
it is not possible to fix $v_0$ as well.\\[1mm]
(c) \ In analogy to (a) note that for $\del_0>0$ assumption 
\eqref{conv_ass_inhom_2} follows automatically from 
\eqref{assum_ml_b} and \eqref{conv_ass_inhom_1}.
Also observe that in the case 
$\delta=\delta_0=\sigma_0=0$
condition \eqref{conv_ass_inhom_3} follows automatically
from conditions \eqref{assum_ml_b} and \eqref{conv_ass_inhom_1}.
\\[1mm] 
(d) \ In the case $\del_0=\sigma_0=0$ conditions 
\eqref{conv_ass_inhom_2} 
and \eqref{conv_ass_inhom_3}
express that $\|\rho_0^\mu \|$ and
$\|h(0)^{\mu}-[\![\gamma\partial_y v_0^{\mu}]\!]\|$ might blow up in 
$\rcsisu$ and $W^{2-6/p}_p(\R^n)$ respectively, but slower than 
$\sigma$ and $\del$ tend to zero.
This seems to be natural in view of the fact that we do not have
$\rho_0^{(0,0)}=\rho^{(0,0)}|_{t=0}\in \rcsisu$
and 
\[
h(0)^{(0,0)}-[\![c\gamma\partial_y v_0^{(0,0)}]\!]
=\partial_t\rho^{(0,0)}|_{t=0}\in W^{2-6/p}_p(\R^n)
\]
from the regularity of sol utions in the situation of the
classical Stefan problem. 
\end{remark}
The Stefan problem is a model for phase transitions in liquid-solid systems
that has attracted considerable attention over the last decades.
We refer to the recent publications 
\cite{EPS03,PSS07,PS08,PSZ11,PSW11} by the authors,
and the references contained therein,
for more background information on the Stefan problem.

Previous results concerning singular limits for the Stefan problem with surface tension and kinetic undercooling
are contained in \cite{BaDe92,Yo96}.
Our work extends these results in several directions:
we obtain sharp regularity results (for the linear model problems),
we can handle all the possible combinations of singular limits, and we obtain convergence in the best
possible regularity classes.

Our approach relies on the powerful theory of maximal $L_p$-regularity, 
${\mathcal H}^\infty$-func\-tion\-al calculus, and ${\mathcal R}$-boundedness,
see for instance \cite{DHP03,KW04} for a systematic introduction.


\section{Maximal regularity}
\label{sec_lin_op}
\noindent
First let us introduce suitable function spaces.
Let $\Omega\subseteq\R^m$ be open and $X$ be 
an arbitrary Banach space.  
By $L_p(\Omega;X)$ and $H^s_p(\Omega;X)$, 
for $1\le p\le\infty$ and $s\in\R$, we denote the $X$-valued Lebegue and
the Bessel potential space of order $s$, respectively.
We will also frequently make use of the fractional 
Sobolev-Slobodeckij
spaces $W^s_p(\Omega;X)$, $1\le p< \infty$,
$s\in\R\setminus\Z$, with norm
\begin{equation}
\label{Slobodeskii}
        \|g\|_{W^s_p(\Omega;X)}
        =\|g\|_{W^{[s]}_p(\Omega;X)}
        +\sum_{|\alpha|=[s]}
         \left(\int_\Omega\int_\Omega
         \frac{\|\partial^\alpha g(x)-\partial^\alpha g(y)\|^p_{X}}
         {|x-y|^{n+(s-[s])p}}\dd x\dd y\right)^{\!1/p}{\hskip-4mm},
\end{equation}
where $[s]$ denotes the largest integer smaller than $s$.
Let $T\in(0,\infty]$ and $J=(0,T)$.
We set
\[
        _0W^s_p(J,X):=\left\{
        \begin{array}{l}
        \{u\in W^s_p(J,X): u(0)=u'(0)=\ldots=u^{(k)}(0)=0\},\\[3mm]
        \mbox{if}\quad 
        k+\frac1{p}<s<k+1+\frac1{p}, \ k\in\N\cup\{0\},\\[3mm]
        W^s_p(J,X),\quad \mbox{if}\quad s<\frac1{p}.
        \end{array}
        \right.
\]
The spaces $_0H^s_p(J,X)$ are defined analogously.
Here we remind that $H^k_p=W^k_p$ for $k\in\Z$ and $1<p<\infty$,
and that $W^s_p=B^s_{pp}$ for $s\in\R\setminus\Z$.
We refer to \cite{Tri78,Tri83} for more information.

Before turning to the proofs of our main results,
we add the following remarks on the linear two-phase Stefan problem 
\eqref{LTS} and the particularly chosen 
extension $\rho_E$ determined by equation \eqref{2-rho_E}.
\begin{remark}
\label{Bemerkungen}
(a) \ \eqref{LTS}--\eqref{2-rho_E}
constitutes a coupled system of equations,
with the functions $(v,\rho,\rho_E)$
to be determined.
We will in the sequel often just refer to a solution
$(v,\rho)$ of \eqref{LTS} with the understanding
that the function $\rho_E$ also has to be determined.
\goodbreak
\smallskip
\noindent
(b) \ 
Suppose $\rho\in W^{1-1/2p}_p(J,L_p(\R^n))\cap 
L_p(J, W_p^{2-1/p}(\R^n))$ and 
$\rho_0\in W^{2-3/p}_p(\R^n)$ is given
such that $\rho(0)=\rho_0$.
Then 
the diffusion equation \eqref{2-rho_E}
admits a unique solution
$$\rho_E\in \W^1_p(J,L^p(\dR^{n+1}))\cap L_p(J,\W^2_p(\dR^{n+1})).$$
This follows, for instance, from \cite[Proposition 5.1]{EPS03},
thanks to
\[
	e^{-|y|(1-\Delta_x)^{\frac12}}\rho_0
	\in W^{2-2/p}_p(\dR^{n+1}).
\]
(c) \ The solution $\rho_E(t,\cdot)$ of equation \eqref{2-rho_E}
provides an extension of
$\rho(t,\cdot)$ to $\dR^{n+1}$.
We should remark that there are many possibilities to
define such an extension.
The chosen one is the most convenient for our purposes.
We also remark that we have great freedom for the extension 
of $\rho_0$.
\end{remark}
Let $T\in(0,\infty]$
and set $J=(0,T)$. By $\FF_T$ we always mean the space
of given data $(f,g,h,v_0,\rho_0)$, i.e., $\FF_T$ is
given by
\[
	\FF_T=\FF_T^1\times\FF_T^2\times\FF_T^3
	      \times\FF^4_T\times\FF^5_T(\del,\sigma),
\]
where
\begin{eqnarray*}
	\FF^1_T&=&\rcftp,\\
	\FF^2_T&=&\rcg,\\
	\FF^3_T&=&\rch\\
	\FF^4_T&=&\rcvitp\\
	\FF^5_T(\del,\sigma)&
	=&W^{2-2/p+\sg(\del+\sigma)(2-1/p)}_p(\R^n).
\end{eqnarray*}
Analogously, we denote by $\EE_T$ the space of the solution
$(v,\rho,\rho_E)$. As was already pointed out in the introduction, 
we have, depending on the
values of $\del$ and $\sigma$, four different types of spaces. For this
reason we set
\[
	\EE_T(\del,\sigma)=\EE^1_T\times\EE^2_T(\del,\sigma)
	\times\EE^1_T
	\quad (\del,\sigma\ge 0),
\]
with
\[
	\EE^1_T=\rcvtp,
\]
and with $\EE^2_T(\del,\sigma)$ as defined in 
\eqref{def_param_dep_space} and equipped with the 
parameter dependent norm given in 
\eqref{norm_param_dep_space}. 
Note that then the norm in $\EE_T(\del,\sigma)$ is given by
\[
	\|(v,\rho,\rho_E)\|_{\EE_T(\del,\sigma)}
	= \|(v,\rho,\rho_E)\|_{\EE_T(0,0)}
	   +\del\|\rho\|_{\EE^2_T(1,0)}
	   +\sigma\|\rho\|_{\EE^2_T(0,1)}
\]
for $(v,\rho,\rho_E)\in\EE_T(\del,\sigma)$.
For fixed $\del,\sigma>0$ by interpolation it can be shown that
\[
	\EE^2_T(\del,\sigma)
	=\rcssu
\]
in the sense of isomorphisms.
We remark that $\EE^2_T(\del,\sigma)$ is the correct regularity class for the
free surface if both, surface tension and kinetic undercooling
are present. The space $\EE^2_T(0,\sigma)$ or $\EE^2_T(\del,0)$
is the proper class if just surface tension or just kinetic
undercooling, respesctively, is present. Finally, 
$\EE^2_T(0,0)$ is the correct class if both of them 
are missing, i.e., $\EE^2_T(0,0)$ is the regularity class 
in the situation of the classical Stefan problem.

The corresponding spaces with zero time trace at the origin 
are denoted by ${_0\FF^1_T}$, ${_0\EE^1_T}$, 
${_0\EE^2_T(\del,\sigma)}$, and so on, that is,
\begin{eqnarray*}
	{_0\FF^2_T}&=&{_0\rcg}\quad\mbox{or}\\
	{_0\EE^1_T}&=&{_0\rcvtp},
\end{eqnarray*}
for instance. Moreover, we set 
\begin{eqnarray*}
	{_0\FF_T}&:=&{\FF^1_T}\times{_0\FF^2_T}\times{_0\FF^3_T},\\
	{_0\EE_T(\del,\sigma)}
	&:=&{_0\EE^1_T}\times{_0\EE^2_T(\del,\sigma)}
	\times{_0\EE^1_T}.\\
\end{eqnarray*}
\subsection{Zero time traces}
\label{sec_ztt}
We will first consider the special case
that 
\[
	(h(0),g(0),v_0,\rho_0)=(0,0,0,0).
\]
This allows us to derive an explicit representation
for the solution of \eqref{LTS}--\eqref{2-rho_E}.
\begin{theorem}
\label{homog_ini}
Let $p\in (3,\infty)$, $T,R>0$, $0\le\del,\sigma\le R$, 
and set $J=(0,T)$. 
Suppose that 
\[
	(f,g,h)\in {_0\FF_T}
\]
and that the function $a=a(\del,\sigma)$ satisfies
the conditions in \eqref{cont_cond_a}.
Then there is a unique solution 
\[
	(v,\rho, \rho_E)=(v^\mu,\rho^\mu, \rho_E^\mu)\in{_0\EE_T(\del,\sigma)} 
\]
of \eqref{LTS}--\eqref{2-rho_E}
satisfying
\begin{equation}\label{max_reg_ineq}
	\|(v,\rho,\rho_E)\|_{_0\EE_T(\del,\sigma)}
	\le C\|(f,g,h)\|_{_0\FF_T}
\end{equation}
with $C>0$ independent of the data, the parameters $(\del,\sigma)\in[0,R]^2$, 
and $T\in(0,T_0]$ for fixed $T_0>0$. 
\end{theorem}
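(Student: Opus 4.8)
The vanishing time traces are the structural feature that makes everything explicit and $T$-uniform, and the plan is to use them twice. First I would reduce to the half-line $J=(0,\infty)$: since $(h(0),g(0),v_0,\rho_0)=0$, each datum in ${_0\FF_T}$ extends by zero across $t=0$ without increasing its norm, so all extension and multiplier constants below are independent of $T\in(0,T_0]$, and it suffices to prove the estimate on $(0,\infty)$. Next I would eliminate $f$: by maximal $L_p$-regularity for $(\partial_t-c_\pm\Delta)$ on $\dR^{n+1}$ with zero initial value there is $v_*\in{_0\EE^1_T}$ solving the bulk equation with $\|v_*\|_{_0\EE^1_T}\le C\|f\|_{\FF^1_T}$; subtracting $v_*$ replaces $(g,h)$ by $(g-\gamma v_*^\pm,\,h-[\![c\gamma\partial_y v_*]\!])$, which again lie in ${_0\FF^2_T}$, ${_0\FF^3_T}$ with controlled norms. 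Hence we may assume $f=0$ and are left with a pure interface problem.

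With $f=0$ and zero initial data, the Laplace transform in $t$ and Fourier transform in $x$ turn \eqref{LTS} into a decoupled ODE in $y$ on each half-line, whose decaying solution is $\hv^\pm(y)=\e^{-\omega_\pm|y|}\hphi$, where $\omega_\pm:=(\lambda/c_\pm+|\xi|^2)^{1/2}$ with $\re\omega_\pm>0$ and $\hphi:=\gamma\hv^+=\gamma\hv^-$ is the (continuous) temperature trace. The extension $\rho_E$ from \eqref{2-rho_E} solves the same ODE with Dirichlet datum $\hat\rho$, giving $\gamma\partial_y\hat\rho_E^\pm=\mp\omega_\pm\hat\rho$. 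Writing $b:=c_+\omega_++c_-\omega_-$ and $b_a:=c_+a_+\omega_++c_-a_-\omega_-$, the two interface conditions in \eqref{LTS} become $\hphi+(\sigma|\xi|^2+\del\lambda)\hat\rho=\hg$ and $(\lambda+b_a)\hat\rho-b\hphi=\hh$. Eliminating $\hphi$ yields the boundary symbol and the closed solution formula
\[
  D:=\lambda+b_a+b\,(\sigma|\xi|^2+\del\lambda),\qquad
  \hat\rho=\frac{\hh+b\,\hg}{D},\qquad
  \hphi=\hg-(\sigma|\xi|^2+\del\lambda)\hat\rho,
\]
from which $\hv^\pm$ and $\rho_E$ are recovered explicitly; the regularity of $\rho_E\in\EE^1_T$ then follows from Remark~\ref{Bemerkungen}(b).

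The analytic heart is to read maximal regularity off these symbols through the operator-valued Fourier multiplier theorem (see \cite{DHP03,KW04}), which requires $\RR$-boundedness of the symbol families and of $\lambda\partial_\lambda$ applied to them, \emph{uniformly} in $(\del,\sigma)\in[0,R]^2$. Introducing the parabolic weight $\kappa:=|\lambda|^{1/2}+|\xi|$, one has $b,b_a\sim\kappa$, $\lambda\sim\kappa^2$, while the two parameter terms are $\lesssim\sigma\kappa^3$ and $\lesssim\del\kappa^3$; consequently $1/D$ maps the data into the classical class $\EE^2_T(0,0)=\rcs$, the first-order part $b_a\sim\kappa$ and second-order part $\lambda\sim\kappa^2$ of $D$ producing precisely its anisotropic space-time regularity. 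The point of the parameter-dependent norm is that the weights $\sigma|\xi|^2$ and $\del\lambda$ distinguishing $\EE^2_T(0,1)=\rcss$ and $\EE^2_T(1,0)=\rcsu$ are absorbed by the order-$\kappa^3$ part of $D$: the multipliers $\sigma|\xi|^2/D$ and $\del\lambda/D$ (and their $\hg$-companions carrying an extra factor $b$) are $\RR$-bounded with bounds independent of $(\del,\sigma)$, so that $\sigma\|\rho\|_{\EE^2_T(0,1)}$ and $\del\|\rho\|_{\EE^2_T(1,0)}$ are controlled uniformly and \eqref{max_reg_ineq} follows, with the $v$-part supplied by the standard half-space heat estimates applied to $\hphi$.

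The main obstacle is the uniform lower bound for $|D|$ across the whole square $[0,R]^2$, i.e.\ uniform ellipticity in the transition between the degenerate classical regime and the regularized one. At $(\del,\sigma)=(0,0)$ one has $D=\lambda+b_a$, whose invertibility on a sector $|\arg\lambda|\le\pi-\theta$ rests on $\re b_a>0$, which is where the hypothesis $a_\pm(0,0)>0$ enters. By the continuity \eqref{cont_cond_a} there is a neighborhood $U$ of $(0,0)$ on which $a_\pm\ge a_0>0$, so that $\re D\gtrsim\kappa$ there; off $U$ one of $\del,\sigma$ is bounded below, and since $\re b>0$ and $\re(\sigma|\xi|^2+\del\lambda)\ge0$ on the sector, the parameter term restores $\re D\gtrsim\kappa+(\sigma|\xi|^2+\del|\lambda|)\kappa$. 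A covering/compactness argument on $[0,R]^2$ then upgrades these pointwise bounds to uniform sectoriality and $\RR$-bounds for all the multipliers above. Finally, uniqueness is immediate: for zero data $D\neq0$ forces $\hat\rho=0$, hence $\hphi=0$, $\hv=0$ and $\rho_E=0$.
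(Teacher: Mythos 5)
Your overall route — Laplace transform in $t$ and Fourier transform in $x$, an explicit formula $\hat\rho=(\hh+b\hg)/D$ with the boundary symbol $D=\lambda+b_a+b(\sigma|\xi|^2+\del\lambda)$, and then $\RR$-bounded operator-valued multipliers to read off maximal regularity in the parameter-weighted norms — is exactly the paper's strategy (there $D$ is the symbol $m$ of \eqref{m} and the multiplier analysis is Proposition~\ref{reg_op_l}); your preliminary elimination of $f$ by a bulk solve is a harmless variant of the paper's ansatz \eqref{u^+,u^-}. But there is a genuine gap at the analytic heart, the uniform lower bound for $|D|$ on $[0,R]^2$. Off a neighborhood of $(0,0)$ the hypotheses \eqref{cont_cond_a} only give continuity of $a_\pm$, so $a_\pm$ may be \emph{negative} there, and your claim that the parameter term ``restores $\re D\gtrsim\kappa$'' fails in the low-frequency regime: with your parabolic weight $\kappa=|\lambda|^{1/2}+|\xi|$ one has $|\lambda|\sim\kappa^2$ and $|b(\sigma|\xi|^2+\del\lambda)|\lesssim\kappa^3$, both of which are $o(\kappa)$ as $\kappa\to0$, while $b_a\sim a_\pm\kappa$ dominates with the wrong sign. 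Concretely, take $\sigma=0$, $\del=\del^*>0$, $a_+=a_-=-1$, $\xi=0$ and $\lambda=t>0$: then $D=t-(\sqrt{c_+}+\sqrt{c_-})\sqrt{t}+O(t^{3/2})$ vanishes at some $t>0$ on the positive real axis, so $1/D$ is not even defined on the half-plane $\re\lambda>0$ and no multiplier theorem applies. This is precisely why the paper replaces $\partial_t$ by $\partial_t+\kappa$ with a \emph{spectral shift} $\kappa\ge1$ chosen large (system \eqref{LOS-infty}), shows via Lemma~\ref{triangle} that the $a$-free part $g$ of the symbol is coercive uniformly in the parameters, and then absorbs $a_\pm\sqrt{c_\pm}\omega_\pm$ as a perturbation of size $C/\sqrt{\kappa}$; the shift is undone at the end by multiplying by $e^{\kappa t}$, which is bounded only because $T\le T_0$ is finite. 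Without this device your pointwise bounds, and hence the whole multiplier argument, do not go through.

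Two smaller points. First, ``extension by zero across $t=0$'' does not reduce $(0,T)$ to $(0,\infty)$: the obstruction is at $t=T$, where the data have no vanishing trace; you need a bounded extension operator ${_0\FF_T}\to{_0\FF_\infty}$ with constant uniform in $T\le T_0$, which the paper builds by reflection ($\E_J$ in \eqref{def_retraction}) combined with the $e^{-\kappa t}$ weight. Second, even in the good regime $a_\pm\ge a_0>0$ the bound cannot be obtained from a naive real-part estimate, since $\re\lambda$ is large and negative near the edge of the sector $\Sigma_{\pi-\varphi_0}$; one needs the quantitative ``no anti-alignment of arguments'' statement of Lemma~\ref{triangle} to get $|D|\gtrsim|\lambda|+|b_a|+|b(\sigma|\xi|^2+\del\lambda)|$, which is what makes the final estimates uniform in $(\del,\sigma)$.
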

\begin{proof}
(i) In order to be able to apply the Laplace transform
in $t$, we consider the modified set
of equations
\begin{equation}
\label{LOS-infty}
        \left\{
        \begin{array}{r@{\quad=\quad}ll}
        (\partial_t+\kappa-c\Delta) u
         & f
        & \mbox{in}\ (0,\infty)\times\dR^{n+1},\\
        \gamma u^{\pm} -\sigma\Delta_x\eta
	+\del(\partial_t+\kappa)\eta& g 
	& \mbox{on}\ (0,\infty)\times\R^n,\\
        (\partial_t+\kappa)\eta+[\![c\gamma\partial_y(u-a\eta_E)]\!] & h
        & \mbox{on}\ (0,\infty)\times\R^n,\\
        u(0) & 0 & \mbox{in}\ \dR^{n+1},\\
        \eta(0) & 0 & \mbox{in}\ \R^n,\\
        \end{array}
        \right.
\end{equation}
and
\begin{equation}
\label{rho_E-infty}
    \left\{
        \begin{array}{r@{\quad=\quad}ll}
        (\partial_t+\kappa-c\Delta)\eta_E 
                &0
        & \mbox{in}\ (0,\infty)\times\dR^{n+1}\,\\
        \gamma \eta_E^{\pm} & \eta 
        & \mbox{on}\ (0,\infty)\times\R^n,\\
        \eta_E(0)  & 0
        & \mbox{in}\ \dR^{n+1},\\
        \end{array}
        \right.
\end{equation}
for the unknown functions $(u,\eta,\eta_E)$ and 
for a fixed number $\kappa\ge 1$
to be chosen later.
We claim that system \eqref{LOS-infty}--\eqref{rho_E-infty}
admits for each $(f,g,h)\in{_0\FF_\infty}$ a unique solution 
\[
	(u,\eta,\eta_E)\in{_0\EE_\infty(\del,\sigma)}
\]
satisfying inequality (\ref{max_reg_ineq}) in the corresponding
norms for $T=\infty$.
\goodbreak
\smallskip
\noindent
(ii)
In the following, the symbol 
$\ \hat{}\ $ denotes the Laplace transform w.r.t.\
$t$ combined with the Fourier transform w.r.t.\ the tangential space
variable $x$.
Applying the two transforms to 
equation \eqref{rho_E-infty} yields 
\begin{equation}
\label{t-rho_E}
        \left\{
        \begin{array}{r@{\quad=\quad}l}
        (\omega^2 - c\partial_y^2) \widehat{\eta_E}(y)
         & 0,
        \quad y\in\dR, \\
        \widehat{\eta_E}^{\pm}(0) & \hat\eta,\\
            \end{array}
        \right.
\end{equation} 
where we set 
\begin{eqnarray*}
        \omega
        &=&\omega(\lambda,|\xi|,y)
	   =\sqrt{\lambda+\kappa+c(y)|\xi|^2},\\ 
           \omega_{\pm}
        &=&\omega_{\pm}(\lambda,|\xi|)
           =\sqrt{\lambda+\kappa+c_{\pm}|\xi|^2}. 
\end{eqnarray*}
with $c(y)=c_\pm$ for $(\pm y)>0$.
Equation \eqref{t-rho_E} can readily be solved to the result
\begin{equation}
\label{hat-rho_E}
\widehat{\eta_E}(y)=e^{-\frac{\omega}{\sqrt{c}}|y|}\hat\eta.
\end{equation}
Next, applying the transforms to \eqref{LOS-infty}
we obtain
\begin{equation}
\label{tsp}
        \left\{
        \begin{array}{r@{\quad=\quad}l}
        (\omega^2 -c\partial_y^2) \hat u(y)
         & \hf(y),
        \quad y\in\dR, \\
        \hat{u}^{\pm}(0) +\sigma|\xi|^2\heta+\del(\lambda+\kappa)
	\heta& \hg,\\
        (\lambda+\kappa)\heta
        +[\![c\partial_y(\hat{u}-a\widehat{\eta_E})(0) ]\!] & \hh.
        \end{array}
        \right.
\end{equation}
By employing the 
fundamental solution  
\[
        k_{\pm}(y,s):=\frac1{2\omega_{\pm}\sqrt{c_{\pm}}}
                (\e^{-\omega_{\pm}\left|y-s\right|/
                \sqrt{c_{\pm}}}
                -\e^{-\omega_{\pm}\left(y+s\right)/
                \sqrt{c_{\pm}}}),\quad y,s>0
\]
of the operator $(\omega_{\pm}^2-c_{\pm}\partial_y^2)$,
we make for $\hat u^{\pm}$ the ansatz
\begin{equation}
\label{u^+,u^-}
\begin{split}
        \hat u^+(y)&\ =\ \int_0^\infty k_+(y,s)\hf^+(s)\dd s
	               -\e^{\omega_+ y/\sqrt{c_+}}
		       (\sigma|\xi|^2\heta+\del(\lambda+\kappa)
		       \heta-\hg),
                      \quad y>0, \\
        \hat u^-(y)&\ =\ \int_0^\infty k_-(-y,s)\hf^-(-s)\dd s
	               -\e^{\omega_-y/\sqrt{c_-}}
		       (\sigma|\xi|^2\heta+\del(\lambda+\kappa)
		       \heta-\hg),
                      \quad y<0.
\end{split}
\end{equation}
A simple computation shows that
\begin{equation*}
\begin{split}
        \partial_y\hat u^+(0)
        &\ =\ \frac1{c_+}
           \int_0^\infty\e^{-\omega_+ s/\sqrt{c_+}}
           \hf^+(s)\dd s
	   +\frac{\omega_+}{\sqrt{c_+}}(\sigma|\xi|^2\heta
	   +\del(\lambda+\kappa)\heta-\hg)
           \qquad\mbox{and}\\
        \partial_y\hat u^-(0)
        &\ =\ -\frac1{c_-}
           \int_0^\infty\e^{-\omega_- s/\sqrt{c_-}}
           \hf^-(-s)\dd s
	   -\frac{\omega_-}{\sqrt{c_-}}(\sigma|\xi|^2\heta
	   +\del(\lambda+\kappa)\heta-\hg).
\end{split}
\end{equation*}
Inserting this and
the fact that $\partial_y \widehat{\eta_E}^{\pm}(0)
=\mp\frac{\omega_{\pm}}{\sqrt{c_{\pm}}}\hat\eta$
in the third line of (\ref{tsp}) yields
\begin{equation}
\label{form_rho}
\begin{split}
        \heta=&\frac1{m}
             \biggl(\hh-\int_0^\infty \e^{-\omega_+ s/\sqrt{c_+}}
             \hf^+(s)\dd s
             -\int_0^\infty \e^{-\omega_- s/\sqrt{c_-}}
             \hf^-(-s)\dd s\biggr.\\
	     &\biggl.\strut + \sqrt{c_+}\omega_+\hg
	     +\sqrt{c_-}\omega_-\hg
	     \biggr),
\end{split}
\end{equation}
with
\begin{eqnarray}
\label{m}
        m(\lambda,|\xi|)&=&\lambda+\kappa
	     +(\sigma|\xi|^2+\del(\lambda+\kappa))
	     \left(\sqrt{c_+}\omega_+(\lambda,|\xi|)
             +\sqrt{c_-}\omega_-(\lambda,|\xi|)\right)\nonumber\\
	     &&\strut + a_+\sqrt{c_+}\omega_+(\lambda,|\xi|)
             +a_-\sqrt{c_-}\omega_-(\lambda,|\xi|)\,.
\end{eqnarray}
\smallskip\\
(iii)
In order to show the claimed regularity for the Laplace Fourier 
inverse of the 
representation $(\hat u,\hat\eta)$ we first show regularity
properties of the symbols involved.
To this end let us introduce the operators that 
correspond to the time derivative and the Laplacian in tangential
direction.   
Let $r,s\ge 0$ and 
\[ 
        \F,\K\in\{H,W\}. 
\]
Then by $\K^s_p$ we either mean the space $H^s_p$
or the space $W^s_p$.
On the space
${_0\F}^r_p(\R_+,\K^s_p(\R^n))$ we define
\begin{equation}\label{dom_g}
        Gu=\partial_t u,
           \quad u\in \DD(G)={_0\F}^{r+1}_p(\R_+,\K^s_p(\R^n)),
\end{equation}
and 
\[
        D_n u=-\Delta u
              \quad u\in \DD(D_n)
              ={_0\F}^r_p(\R_+,\K^{s+2}_p(\R^n)),
\] 
that is, $D_n$ denotes the canonical
extension to ${_0\F}^r_p(\R_+,\K^s_p(\R^n))$ 
of $-\Delta$ in $\K^s_p(\R^n)$.
Note that
\begin{equation}\label{rh_infty_g}
	G\in\RR\HH^\infty({_0\F}^r_p(\R_+,\K^s_p(\R^n)))
	\quad\mbox{with}\quad
	\phi^{R,\infty}_{G}=\pi/2
\end{equation}
and
\begin{equation}\label{rh_infty_dn}
	D_n\in\RR\HH^\infty({_0\F}^r_p(\R_+,\K^s_p(\R^n)))
	\quad\mbox{with}\quad
	\phi^{R,\infty}_{D_n}=0,
\end{equation}
i.e.\ both, $G$ and $D_n$ admit an $\RR$-bounded \HIC{}
with $\RR\HH^\infty$-angle $\phi^{R,\infty}_{G}=\pi/2$ and 
$\phi^{R,\infty}_{D_n}=0$, respectively.
Recall that an operator $A$ admits an $\RR$-bounded \HIC{}
with $\RR\HH^\infty$-angle $\phi^{R,\infty}_A$,
if it admits a bounded \HIC{} and if 
\[
        \RR\big(\big\{h(A):h\in H^\infty(\Sigma_\phi),
        \|h\|_\infty\le 1\big\}\big)
        <\infty
\]
for each $\phi>\phi^{R,\infty}_A$,
where $\RR(\T)$ denotes the $\RR$-bound of an operator family
$\T\subset \L(X)$ for a Banach space $X$, see \cite{DHP03,KW04} for additional information.

The inverse transform of the occuring symbols
can formally be regarded as functions of $G$ and $D_n$.
We first consider the symbol $\omega_\pm$. 
The corresponding operator is formally given by
\begin{equation}\label{def_f}
        F_{\pm}=(G+\kappa+c_{\pm}D_n)^{1/2}.
\end{equation}
\begin{lemma}\label{dom_f}
Let $1<p<\infty$ and $r,s\ge 0$. Then we have that
\[
	F_{\pm}:\DD(F_{\pm})\to{_0\F}^r_p(\R_+,\K^s_p(\R^n))
\]	
with
\[
	\DD(F_{\pm})={_0\F}^{r+1/2}_p(\R_+,\K^s_p(\R^n))
             \cap{_0\F}^r_p(\R_+,\K^{s+1}_p(\R^n)),
\]
is closed and invertible,
where we set ${\F}=H$ in case $2r\in\N$.
\end{lemma}
\begin{proof}
The assertion follows from \cite[Proposition 2.9 and Lemma 3.1]{MS11}.
\end{proof}
Next we show closedness and invertibility of the operator 
\begin{equation}
\label{L}
        L:=G+\kappa+(\sigma D_n+\del(G +\kappa))
	\left(\sqrt{c_+}F_+ +\sqrt{c_-}F_-
	   \right)+a_+\sqrt{c_+}F_+ +a_-\sqrt{c_-}F_-\,,
\end{equation}  
associated with the symbol $m$ introduced in \eqref{m}, in the space 
${_0\F}^r_p(\R_+,\K^s_p(\R^n))$.
We will prove invertibility of $L$ and derive uniform estimates 
with respect to the parameters $(\delta,\sigma)$ in various adapted norms.
In view of (\ref{rh_infty_g}), (\ref{rh_infty_dn}),
and by the Theorem of Kalton and
Weis \cite[Theorem~4.4]{KW01} it essentially remains
to show the holomorphy and the boundedness of the symbols
regarded as functions of $\lambda$ and $|\xi|^2$ on certain
complex sectors.

In order to obtain these estimates, 
the following simple lemma will be useful.
\goodbreak
\begin{lemma}\label{triangle}
Let $G\subseteq\C^n$ be a domain. Let $f_1,f_2:G\to\C$ be 
functions such that $f_1(z)\neq 0$ for $z\in G$. Then
the following statements are equivalent:
\begin{itemize}
\item[(i)] $\displaystyle -1\ \not\in \ \overline{\frac{f_2}{f_1}(G)}$.
\smallskip
\item[(ii)] There exists a $c_0>0$ such that
\[
	|f_1(z)+f_2(z)|\ge c_0(|f_1(z)|+|f_2(z)|),
	\quad z\in G.
\]
\end{itemize}
\end{lemma}
\begin{proof}
We set
\[
	g:G\to\R,\quad
	g(z):=\frac{|f_1(z)+f_2(z)|}{|f_1(z)|+|f_2(z)|},
	\quad z\in G,
\]
which is a well defined function.
Observe that (ii) is equivalent to saying that 
$0\not\in \overline{g(G)}$.
By contradiction arguments it is not difficult to show
that this relation is equivalent to condition (i).
\end{proof}
\medskip
\begin{remark}
The assumption $f_1(z)\neq 0$ for $z\in G$ 
is just for technical reasons and can be removed.
\end{remark}
\noindent
Now we prove closedness and invertibility of $L$.
\begin{proposition}
\label{reg_op_l}
Let $1<p<\infty$, $r,s\ge 0$, $R>0$, $(\del,\sigma)\in [0,R]^2$, 
and $\F,\K\in\{H,W\}$. Suppose that $a$ is a function 
satisfying condition \eqref{cont_cond_a}. Then there is a number
$\kappa\ge 1$ such that
\begin{eqnarray*}
        \DD(L)&=&{_0\F}^{r+1+\sg(\del)/2}_p(\R_+,\K^s_p(\R^n))
                 \cap{_0\F}^{r+1}_p(\R_+,\K^{s+\sg(\del)}_p(\R^n))\\
              && \strut
	         \cap{_0\F}^{r+1/2}_p(\R_+,\K^{s+2\sg(\sigma)}_p(\R^n))
                 \cap{_0\F}^r_p(\R_+,\K^{s+1+2\sg(\sigma)}_p(\R^n))
\end{eqnarray*}
and $L:\DD(L)\to {_0\F}^r_p(\R_+,\K^s_p(\R^n))$ is invertible.
Furthermore, 
\begin{eqnarray*}
	&&\sigma\|D_n (G+1)^{1/2}L^{-1}\|_0
	+\sigma\|D_n^{3/2}L^{-1}\|_0\\
	&&\strut+\del\|(G+1)^{3/2}L^{-1}\|_0
	+\del\|D_n^{1/2}(G+1)L^{-1}\|_0
	\strut+\|L^{-1}\|_1\le C
\end{eqnarray*}
with $C>0$ independent of $(\del,\sigma)\in[0,R]^2$,
where $\|\cdot\|_0$ denotes the norm in
\[
	\LL\left({_0\F}^r_p(\R_+,\K^s_p(\R^n))\right),
\]
and $\|\cdot\|_1$ the norm in
\[
	\LL\left({_0\F}^r_p(\R_+,\K^s_p(\R^n)),
	{_0\F}^{r+1}_p(\R_+,\K^s_p(\R^n))
        \cap{_0\F}^r_p(\R_+,\K^{s+1}_p(\R^n))\right).
\]
\end{proposition}
\begin{proof}
Let $\varphi_0\in(0,\pi/2)$ and $\varphi\in(0,\varphi_0)$.
By a compactness and homogeneity argument it easily follows that
\begin{eqnarray}
	|\omega_{\pm}(\lambda,z)|
	&=&
	|\sqrt{\lambda+\kappa+c_\pm z}|\nonumber\\
	&\ge& c_0\left(
	\sqrt{|\lambda|}+\sqrt{\kappa}+c_\pm \sqrt{|z|}\right)
	\label{est_omega}
\end{eqnarray}
for all $(\lambda,z,\kappa)\in\Sigma_{\pi-\varphi_0}
\times\Sigma_{\varphi}\times [1,\infty)$ and some $c_0>0$.

In the following we let  $\varphi_0\in(\pi/3,\pi/2)$ and 
$\varphi\in(0,\varphi_0-\pi/3)$.
Note that by condition \eqref{cont_cond_a} on $a$ there
exist $\del^*,\sigma^*>0$ and $M,c_0>0$ such that
\begin{equation}\label{a_greater_zero}
	a_\pm(\del,\sigma)\ge c_0 \qquad\quad(
	(\del,\sigma)\in [0,\del^*]\times[0,\sigma^*])
\end{equation}
and
\begin{equation}\label{a_smaller_m}
	|a_\pm(\del,\sigma)|\le M \qquad\quad(
	(\del,\sigma)\in [0,R]\times[0,R]).
\end{equation}

First assume that \eqref{a_greater_zero} is satisfied, i.e.,
that $(\del,\sigma)\in [0,\del^*]\times[0,\sigma^*]$.
Let $m$ be as given in (\ref{m}). We consider the function 
\begin{equation*}
\begin{split}	
        &f:\Sigma_{\pi-\varphi_0}\times\Sigma_{\varphi}
	\times [0,\del^*]\times[0,\sigma^*]\times[1,\infty) \to \C,\\
	&(\lambda,z,\del,\sigma,\kappa)\mapsto 
	f(\lambda,z,\del,\sigma,\kappa):=m(\lambda,z)
	:=f_1(\lambda,z,\del,\sigma,\kappa)
	+f_2(\lambda,z,\del,\sigma,\kappa),
\end{split}
\end{equation*}
with
\begin{eqnarray*}
        f_1(\lambda,z,\sigma,\del,\kappa)
	&:=&(\lambda+\kappa)\left[\del(\sqrt{c_+}\omega_+(\lambda,z) 
	+\sqrt{c_-}\omega_-(\lambda,z))+1\right],\\
        f_2(\lambda,z,\sigma,\del,\kappa)
	&:=& m(\lambda,z)-f_1(\lambda,z,\sigma,\del,\kappa)\\
	&=& \sigma z
	     \left(\sqrt{c_+}\omega_+(\lambda,z)
             +\sqrt{c_-}\omega_-(\lambda,z)\right)\\
	&&  \strut + a_+(\del,\sigma)\sqrt{c_+}\omega_+(\lambda,z)
             +a_-(\del,\sigma)\sqrt{c_-}\omega_-(\lambda,z).
\end{eqnarray*}
Note that by our choice of the angle $\varphi$ for 
$(\lambda,z,\del,\sigma,\kappa)
\in\Sigma_{\pi-\varphi_0}\times\Sigma_{\varphi}
\times[0,\del^*]\times[0,\sigma^*]\times[1,\infty)$ 
with $\arg \lambda\ge 0$ there exists an $\eps>0$
such that
\[
        \pi-\varphi_0\ge \frac{\pi-\varphi_0}{2}+\varphi
	\ge \arg \sigma z\sqrt{\lambda + \kappa+c_\pm z}\ge
	-\frac{3\varphi}{2}\ge-\frac{3\varphi_0}{2}
	+\frac{\pi}{2}+\eps,
\]
if $\sigma>0$, and that
\[
        \frac{\pi-\varphi_0}{2}
	\ge \arg \sqrt{\lambda +\kappa +c_\pm z}\ge
	-\frac{\varphi}{2}.
\]
By these two estimates we see that in any case we obtain 
\[
        \frac{3(\pi-\varphi_0)}{2}
	\ge \arg f_1(\lambda,z,\del,\sigma,\kappa)\ge
	-\frac{\varphi}{2}.
\]
and 
\[
        \pi-\varphi_0
	\ge f_2(\lambda,z,\del,\sigma,\kappa)\ge
	-\frac{3\varphi}{2}\ge-\frac{3\varphi_0}{2}
	+\frac{\pi}{2}+\eps.
\]
Consequently,
\[
	\frac{2\pi}{3}
        \ge \pi-\varphi_0 +\frac{\varphi}{2}
	\ge \arg \frac{f_2(\lambda,z,\del,\sigma,\kappa)}
	{f_1(\lambda,z,\del,\sigma,\kappa)}
	\ge -\frac{3\varphi_0}{2}
	+\frac{\pi}{2}+\eps
        -\frac{3(\pi-\varphi_0)}{2}
	=-\pi+\eps.
\]
A similar argument holds for the case that 
$(\lambda,z,\del,\sigma,\kappa)
\in\Sigma_{\pi-\varphi_0}\times\Sigma_{\varphi}
\times[0,\del^*]\times[0,\sigma^*]\times[1,\infty)$ 
with $\arg \lambda\le 0$. Here we obtain
\[
        -\frac{2\pi}{3}
	\le \arg \frac{f_2(\lambda,z,\del,\sigma,\kappa)}
	{f_1(\lambda,z,\del,\sigma,\kappa)}
	\le \pi-\eps.
\]
This implies that 
\[
	-1\ \not\in\ \overline{\Sigma}_{\pi-\eps}
	\supseteq\overline{\frac{f_2}{f_1} 
	\left(\Sigma_{\pi-\varphi_0}\times\Sigma_{\varphi}
	\times[0,\del^*]\times[0,\sigma^*]\times[1,\infty)\right)}
\]
Lemma~\ref{triangle} now yields the existence of a $c_1>0$
such that
\[
	|f_1(\lambda,z,\del,\sigma,\kappa)
	+f_2(\lambda,z,\del,\sigma,\kappa)|
	\ge c_1(|f_1(\lambda,z,\del,\sigma,\kappa)|
	+|f_2(\lambda,z,\del,\sigma,\kappa)|)
\]
for all $(\lambda,z,\del,\sigma,\kappa)
\in\Sigma_{\pi-\varphi_0}\times\Sigma_{\varphi}
\times[0,\del^*]\times[0,\sigma^*]\times[1,\infty)$.
An iterative application of Lemma~\ref{triangle} on the 
summands of $f_1$ and $f_2$ and an application 
of inequality (\ref{est_omega}) then
result in
\begin{eqnarray}
	&&|f(\lambda,z,\sigma,\del,\kappa)|\nonumber\\
	&&\ \ge \ c_2\left\{|\lambda|+\kappa+\sigma |z|
	      \left(\sqrt{|\lambda|}+\sqrt{\kappa}+\sqrt{c_+|z|}
	      +\sqrt{c_-|z|}\right)\right.\nonumber\\ 
	&&\ \ \ \      \strut +\del (|\lambda|+\kappa)
	      \left(\sqrt{|\lambda|}+\sqrt{\kappa}+\sqrt{c_+|z|}
	      +\sqrt{c_-|z|}\right)\nonumber\\
	&&\ \ \ \   \left.\strut + a_+\left(
	      \sqrt{|\lambda|}+\sqrt{\kappa}+\sqrt{c_+|z|}\right)
	      +a_-\left(
	      \sqrt{|\lambda|}+\sqrt{\kappa}+\sqrt{c_-|z|}\right)
	      \right\},\nonumber\quad\qquad
\end{eqnarray}
for all $(\lambda,z,\del,\sigma,\kappa)
\in\Sigma_{\pi-\varphi_0}\times\Sigma_{\varphi}
\times[0,\del^*]\times[0,\sigma^*]\times[1,\infty)$.
This implies that the functions
\[
	m_0:=\frac1{f},\ \ 
	m_1:=\frac{\lambda+\kappa}{f},\ \ 
	m_2:=\frac{\sqrt{z}}{f},\ \
	m_3:=\frac{\sigma z\sqrt{\lambda+\kappa}}{f},
\]
\[
	m_4:=\frac{\sigma z^{3/2}}{f},\ \
	m_5:=\frac{\del(\lambda+\kappa)^{3/2}}{f},\ \
	m_6:=\frac{\del(\lambda+\kappa) \sqrt{z}}{f}
\]
are uniformly bounded on 
$\Sigma_{\pi-\varphi_0}\times\Sigma_{\varphi}
\times[0,\del^*]\times[0,\sigma^*]\times[1,\infty)$.

Now consider the cases $R\ge\del\ge\del^*>0$ 
or $R\ge\sigma\ge\sigma^*>0$. We set
\[
	g(\lambda,z,\del,\sigma,\kappa)
	:=f(\lambda,z,\del,\sigma,\kappa)
	-a_+(\del,\sigma)\sqrt{c_+}\omega_+(\lambda,z)
	-a_-(\del,\sigma)\sqrt{c_-}\omega_-(\lambda,z).
\]
The argumentation above shows that 
\[
	\frac1{g},\ \ 
	\frac{\lambda+\kappa}{g},\ \ 
	\frac{\sigma z\sqrt{\lambda+\kappa}}{g},\ \ 
	\frac{\sigma z^{3/2}}{g},\ \
	\frac{\del(\lambda+\kappa)^{3/2}}{g},\ \
	\frac{\del(\lambda+\kappa) \sqrt{z}}{g}
\]
are still uniformly bounded functions and this even on
$\Sigma_{\pi-\varphi_0}\times\Sigma_{\varphi}
\times[0,R]^2\times[1,\infty)$. The aim now is to show
that the term 
$a_+(\del,\sigma)\sqrt{c_+}\omega_+(\lambda,z)
	+a_-(\del,\sigma)\sqrt{c_-}\omega_-(\lambda,z)$
can be regarded as a perturbation of $g$, if $\kappa$
is assumed to be large enough. Indeed, if $\del\ge\del^*>0$,
by using \eqref{a_smaller_m} we can estimate
\begin{eqnarray*}
	\left|\frac{a_\pm(\del,\sigma)\sqrt{c_\pm}\omega_\pm}
	{g}\right|
	&\le& \frac{CM}{\del^*|\lambda+\kappa|}
	      \left|\frac{\del(\lambda+\kappa)\omega_\pm}
	      {g}\right|\\
	&\le& \frac{C}{|\lambda|+\kappa}\quad
	      \le \ \frac{C}{\kappa}      
\end{eqnarray*}
for $(\lambda,z,\del,\sigma,\kappa)
\in\Sigma_{\pi-\varphi_0}\times\Sigma_{\varphi}
\times[\del^*,R]\times[0,R]\times[1,\infty)$. 
On the other hand, if $\sigma\ge\sigma^*>0$, we deduce
by virtue of \eqref{est_omega} that
\begin{eqnarray*}
	\left|\frac{a_\pm(\del,\sigma)\sqrt{c_\pm}\omega_\pm}
	{g}\right|
	&\le& \frac{CM}{|\omega_\pm|}
	      \left|\frac{\lambda+\kappa+c_\pm z}
	      {g}\right|\\
	&\le& \frac{C}{\sqrt{\kappa}}     
	      \left(\left|\frac{\lambda+\kappa}
	      {g}\right|
	      +\frac{1}{\sigma^*|\sqrt{\lambda+\kappa}|}
	      \left|\frac{\sigma z\sqrt{\lambda+\kappa}}
	      {g}\right|\right)\\
	&\le& \frac{C}{\sqrt{\kappa}}
\end{eqnarray*}
for $(\lambda,z,\del,\sigma,\kappa)
\in\Sigma_{\pi-\varphi_0}\times\Sigma_{\varphi}
\times[0,R]\times[\sigma^*,R]\times[1,\infty)$. 
Hence, for fixed $\kappa$ chosen large enough we see that
we can achieve 
\[
	\left|\frac{a_+(\del,\sigma)\sqrt{c_+}\omega_+
	+a_-(\del,\sigma)\sqrt{c_-}\omega_-}
	{g}\right|
	\le \frac12
\]
to be valid for $(\lambda,z,\del,\sigma)
\in\Sigma_{\pi-\varphi_0}\times\Sigma_{\varphi}
\times[\del^*,R]\times[0,R]$
or $(\lambda,z,\del,\sigma)
\in\Sigma_{\pi-\varphi_0}\times\Sigma_{\varphi}
\times[0,R]\times[\sigma^*,R]$.
Thus, we may represent $1/f$ as
\[
	\frac1{f}
	=\frac1{g}
	\left(1+\frac{a_+(\del,\sigma)\sqrt{c_+}\omega_+
	+a_-(\del,\sigma)\sqrt{c_-}\omega_-}{g}\right)^{-1},
\]
and therefore the functions $m_0,\ldots,m_6$
are uniformly bounded for all
$(\lambda,z,\del,\sigma)
\in\Sigma_{\pi-\varphi_0}\times\Sigma_{\varphi}\times[0,R]^2$.

The remaining argumentation is now analogous to the proof
of Lemma~\ref{dom_f}.
Employing (\ref{rh_infty_dn}) we obtain
\[
        \RR\biggl(\biggl\{
	\|m_j(\lambda,D_n,\del,\sigma)
	\|_{\LL({_0\F}^r_p(\R_+,\K^s_p(\R^n)))}
	:(\lambda,\del,\sigma)\in \Sigma_{\pi-\varphi_0}
	\times[0,R]^2\biggr\}\biggr)
        \le C,
\]
for $j=0,1,\ldots,6$. Consequently,
\[
	\|m_j(G,D_n,\del,\sigma)
	\|_{\LL({_0\F}^r_p(\R_+,\K^s_p(\R^n)))}
	\le C\quad ((\del,\sigma)\in[0,R]^2),
\]
by virtue of (\ref{rh_infty_g}) and \cite[Theorem~4.4]{KW01}.
The invertibility of the operators
\begin{eqnarray*}
	(G+1)^{1/2}&:&{_0\F}^{r+1/2}_p(\R_+,\K^s_p(\R^n))
	\to {_0\F}^r_p(\R_+,\K^s_p(\R^n)),\\
	D_n^{1/2}+1&:&{_0\F}^r_p(\R_+,\K^{s+1}_p(\R^n))
	\to {_0\F}^r_p(\R_+,\K^s_p(\R^n)),
\end{eqnarray*}
(see for instance Proposition 2.9 and Lemma 3.1 in \cite{MS11})
then yields the assertion, 
since $L^{-1}=m_0(G,D_n,\sigma,\del)$, and by employing
the fact that $h\mapsto h(G)$ is an algebra homomorphism
from $H^\infty(\Sigma_{\pi-\varphi_0},\KK_G(X))$
into $\LL(X)$ for  $X={_0\F}^r_p(\R_+,\K^s_p(\R^n))$ and
where
\[
	\KK_G(X):=\{B\in \LL(X):B(\mu-G)^{-1}=(\mu-G)^{-1}B,
	\ \mu\in\rho(G)\}.
\]
\end{proof}
\noindent
(iv) 
We turn to the proof of the corresponding regularity assertions in 
Theorem~\ref{homog_ini} for $(u,\eta,\eta_E)$. 
According to the results in \cite[pages 15--16]{EPS03},
\begin{equation}
\label{iff}
\begin{split}
&\int_0^\infty \e^{-F_+s/\sqrt{c_+}}f^+(s)\dd s 
\in {_0\FF}^3_\infty
\iff f^+\in L_p(\R_+,L_p(\R^{n+1}_+)).
\end{split}
\end{equation}
By the same arguments we also have
\begin{equation}
\label{iff1}
\begin{split}
&\int_0^\infty \e^{-F_-s/\sqrt{c_-}}f^-(-s)\dd s
\in {_0\FF}^3_\infty
\iff f^-\in L_p(\R_+,L_p(\R^{n+1}_-)).
\end{split}
\end{equation}
Next, note that by Lemma~\ref{dom_f} we have that 
\begin{equation}
\label{F^2toF^3}
F_\pm\in{\rm Isom}({_0\FF_\infty^2},{_0\FF_\infty^3}).
\end{equation}
Indeed, we obtain
\begin{eqnarray*}
	F^{-1}_\pm({_0\FF^3_\infty})
	&=&{_0W}^{1-1/2p}_p(\R_+,L_p(\R^n))
	   \cap {_0W}^{1/2-1/2p}_p(\R_+, W^1_p(\R^n))\\
	&& \cap \ {_0H}^{1/2}_p(\R_+, W^{1-1/p}_p(\R^n))
	   \cap L_p(\R_+, W^{2-1/p}_p(\R^n))\\
	&=&{_0\FF}^2_\infty,
\end{eqnarray*}
by virtue of the embedding
\[
	{_0\FF}^2_\infty\hookrightarrow
	{_0W}^{1/2-1/2p}_p(\R_+,W^1_p(\R^n))
	\cap {_0H}^{1/2}_p(\R_+,W^{1-1/p}_p(\R^n)),
\]
which is a consequence of the mixed derivative theorem.
Thus all the terms inside the brackets on the right hand side of
(\ref{form_rho}) belong to the space ${_0\FF_\infty^3}$.
In the same way as we clarified the invertibility of
$F_\pm:{_0\FF_\infty^2}\to {_0\FF_\infty^3}$ by applying
Lemma~\ref{dom_f}, we can see that 
$L:{_0\EE_\infty^2(\del,\sigma)}\to{_0\FF_\infty^3}$ is invertible
by an application of Proposition~\ref{reg_op_l}. For instance, 
if $\del,\sigma>0$, this
follows from the embedding
\[
	{_0\EE}^2_\infty(\del,\sigma)\hookrightarrow
	{_0H}^{3/2}_p(\R_+, W^{1-1/p}_p(\R^n))
	\cap {_0 W^{1-1/2p}_p(\R_+, W^{3}_p(\R^n))},
\]
which is again a consequence of the mixed derivative theorem.
Furthermore, Proposition~\ref{reg_op_l} implies the estimate
\[
	\|L^{-1}\|_{\LL({_0\FF_\infty^3},{_0\EE_\infty^2(0,0)})}
	+\del\|L^{-1}\|_{\LL({_0\FF_\infty^3},
	{_0\EE_\infty^2(1,0)})}
	+\sigma\|L^{-1}\|_{\LL({_0\FF_\infty^3},
	{_0\EE_\infty^2(0,1)})}
	\le C
\]
for $0\le\del,\sigma\le R$.
Altogether this gives us
\begin{equation}\label{est_eta}
	\|\eta\|_{_0\EE_\infty^2(\del,\sigma)}
	\le C\left(\|f\|_{\FF_\infty^1}
	+\|g\|_{_0\FF_\infty^2}
	+\|h\|_{_0\FF_\infty^3}\right)
\end{equation}
for $(\del,\sigma)\in[0,R]^2$,
which yields the desired regularity for $\eta$.
Observe that $u$ now can be regarded as the solution of the 
diffusion equation 
\[
    \left\{
        \begin{array}{r@{\quad=\quad}ll}
        (\partial_t+\kappa-c\Delta)u 
                &f
        & \mbox{in}\ (0,\infty)\times\dR^{n+1}\,\\
        \gamma u^{\pm} & g+\sigma\Delta_x\eta
	-\del(\partial_t+\kappa)\eta 
        & \mbox{on}\ (0,\infty)\times\R^n,\\
        u(0)  & 0
        & \mbox{in}\ \dR^{n+1}.\\
        \end{array}
        \right.
\]
A trivial but important observation now is that this
equation itself does not depend on $\del$ and $\sigma$,
but only the data. Therefore also the corresponding
solution operator is independent of $\del$ and $\sigma$.
By well-known results (see e.g. \cite[Proposition 5.1]{EPS03}) 
and in view of (\ref{est_eta}) we obtain
\begin{eqnarray*}
	\|u\|_{_0\EE_\infty^1}
	&\le& C\left(\|f\|_{\FF_\infty^1}
	      +\|g\|_{_0\FF_\infty^2}
	      +\del\|\eta\|_{_0\EE_\infty^2(1,0)}
	      +\sigma\|\eta\|_{_0\EE_\infty^2(0,1)}\right)\\
	&\le& C\|(f,g,h)\|_{_0\FF_\infty}
	\qquad (0\le\del,\sigma\le R).
\end{eqnarray*}
Similarly we can proceed for $\eta_E$. Since it satisfies
equation (\ref{rho_E-infty}), we deduce
\[
	\|\eta_E\|_{_0\EE_\infty^1}
	\le C\|\eta\|_{_0\FF_\infty^2}.
\]
By virtue of ${_0\EE_\infty^2(0,0)}\hookrightarrow{_0\FF_\infty^2}$
and again (\ref{est_eta}) we conclude that
\[
	\|\eta_E\|_{_0\EE_\infty^1}
	\le C\|(f,g,h)\|_{_0\FF_\infty}
	\qquad(0\le\del,\sigma\le R).
\]
\smallskip\\
(v) 
Let $T_0>0$ be fixed, and let
$J:=(0,T)$ with $T\le T_0.$ 
We set
\begin{equation}\label{def_retraction}
\begin{aligned}
{\mathcal R}^c_{\!J}&:\ {_0\FF_T}\ \to \ {_0\FF_\infty}, \\
(f,g,h)&\ \mapsto \ 
(e^{-\kappa t}(\E_{\!J}f),e^{-\kappa t}(\E_{\!J}g),
e^{-\kappa t}(\E_{\!J}h)),
\end{aligned}
\end{equation}
where $\E_{\!J}$ is defined as
\[
        \E_{\!J} u(t):=\E_{\!J,r}u(t):=
        \left\{\begin{array}{ll}
              u(t)   & \mbox{if}\quad 0\le t\le T,\\
                  u(2T-t)& \mbox{if}\quad T\le t\le 2T, \\
                  0      & \mbox{if}\quad 2T\le t.\\
                  \end{array}
                  \right.
\]
It follows from \cite[Proposition~6.1]{PSS07}
and the fact
\[
	\|(e^{-\kappa t}(\E_{\!J}f),(e^{-\kappa t}(\E_{\!J}g),
	e^{-\kappa t}(\E_{\!J}h))\|_{_0\FF_\infty}
	\le \|e^{-\kappa t}\|_{\BUC^1(\R_+)}\|(\E_{\!J}f,
	\E_{\!J}g,\E_{\!J}h)\|_{_0\FF_\infty}
\]
that there
exists a positive constant $c_0=c_0(T_0)$ such that
\begin{equation}
\label{exp-extension}
\|{\mathcal R}^c_{\!J}(f,g,h)\|_{_0\FF_\infty}
\le c_0 \|(f,g,h)\|_{_0\FF_T}\qquad ((f,g,h)\in {_0\FF_T})
\end{equation}
for any interval $J=(0,T)$ with $T\le T_0$.
\smallskip\\
Let $(u,\eta,{\eta_E})\in{_0\EE_\infty(\del,\sigma)}$
be the solution of \eqref{LOS-infty}--\eqref{rho_E-infty},
with $(f,g,h)$ replaced by $({\mathcal R}^c_{\!J}(f,g,h))$,
whose existence has been established in steps
(i)--(iv) of the proof. We note that 
\begin{eqnarray*}
\label{schranke}
	\|(u,\eta,\eta_E)\|_{_0\EE_\infty(\del,\sigma)}
	&\le& K \|{\mathcal R}^c_{\!J}(f,g,h)\|_{_0\FF_\infty}\\
	&\le& Kc_0 \|(f,g,h)\|_{_0\FF_T}
\end{eqnarray*}
for any $(f,g,h)\in{_0\FF_T}$, $0\le\del,\sigma\le R$, 
and any interval $J=(0,T)$ with $T\le T_0$,
where $K$ is a universal constant. 
Now, let 
$$(v,\rho,\rho_E):=
({\mathcal R_{\!J}}(e^{\kappa t}u), 
{\mathcal R_{\!J}}(e^{\kappa t}\eta),
{\mathcal R_{\!J}}(e^{\kappa t}\eta_E))$$
where ${\mathcal R_{\!J}}$ denotes the restriction operator,
defined by ${\mathcal R_{\!J}}w:=w|_J$ for $w:\R_+\to X$. 
Then it is easy to verify that
\begin{equation}
\label{irgendwas}
	(v,\rho,\rho_E)\in{_0\EE_T(\del,\sigma)},
	\quad\text{$(v,\rho,\rho_E)$ 
	solves \eqref{LTS}--\eqref{2-rho_E}}
\end{equation}
and that there is a constant $M=M(T_0)$
such that
\begin{equation*}
	\|(v,\rho,\rho_E)\|_{_0\EE_T(\del,\sigma)}
	\le M \|(f,g,h)\|_{_0\FF_T}
\end{equation*}
for $0\le\del,\sigma\le R$, and $T\le T_0$.
Finally, uniqueness follows by a direct calculation
which is straight forward and therefore omitted here.
This completes the proof.
\end{proof}
We proceed with convergence results for the case of 
zero time traces. To indicate the dependence on
the parameters $\del$ and $\sigma$ we label from now on
the corresponding functions and operators by $\mu$,
as e.g. $L_\mu$, $v^\mu$,  where $\mu=(\del,\sigma)$.
\begin{corollary}
\label{conv_op_l}
Let $1<p<\infty$, $R>0$, $0\le\del_0\le\del\le R$,
and $0\le\sigma_0\le\sigma\le R$. Suppose that $a$ is a function 
satisfying the conditions in \eqref{cont_cond_a}, and 
let $L_\mu$ be the operator
defined in \eqref{L} corresponding to the parameter
$\mu:=(\del,\sigma)$.
Then we have 
\begin{equation}\label{conv_l_ku}
	(\del-\del_0)L_\mu^{-1}\to 0
	\quad\mbox{stronly in}\quad
	\LL({_0\FF^3_\infty},{_0\EE^2_\infty(1,0)}),
\end{equation}
\begin{equation}\label{conv_l_st}
	(\sigma-\sigma_0)L_\mu^{-1}\to 0
	\quad\mbox{stronly in}\quad
	\LL({_0\FF^3_\infty},{_0\EE^2_\infty(0,1)}),
\end{equation}
and
\begin{equation}\label{conv_l_3}
	L_\mu^{-1}\to L_{\mu_0}^{-1}
	\quad\mbox{stronly in}\quad
	\LL({_0\FF^3_\infty},{_0\EE^2_\infty(\mu_0)}),
\end{equation}
as $\mu\to \mu_0$, where $\mu_0=(\del_0,\sigma_0)$.
\end{corollary}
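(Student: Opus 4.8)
The plan is to derive all three convergences \eqref{conv_l_ku}--\eqref{conv_l_3} from four ingredients: the uniform operator bounds of Proposition~\ref{reg_op_l}, the pointwise convergence of the scalar symbols, a dominated-convergence principle for the joint functional calculus of $G$ and $D_n$, and a careful accounting of the scalar prefactors. Recall from the proof of Proposition~\ref{reg_op_l} that $L_\mu^{-1}=m_0(G,D_n,\del,\sigma)$, and that the three target norms are governed by finitely many of the symbols from \eqref{m}: the norm of ${_0\EE^2_\infty(0,0)}$ by $m_0,m_1,m_2$, that of ${_0\EE^2_\infty(1,0)}$ by $(\lambda+\kappa)^{3/2}/f$ and $(\lambda+\kappa)z^{1/2}/f$, and that of ${_0\EE^2_\infty(0,1)}$ by $z(\lambda+\kappa)^{1/2}/f$ and $z^{3/2}/f$. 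Since all these families are uniformly bounded on $[0,R]^2$ by Proposition~\ref{reg_op_l}, it suffices---by the uniform boundedness of the operators together with a three-$\eps$ argument on a dense subset---to establish convergence on a suitable dense subspace of ${_0\FF^3_\infty}$.

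First I would record the pointwise convergence of the symbols. The functions $\omega_\pm$ in \eqref{m} are independent of $\mu$, while $f=m(\lambda,z,\del,\sigma,\kappa)$ depends continuously on $\mu=(\del,\sigma)$, because the coefficients $a_\pm$ do by \eqref{cont_cond_a}. The lower bound for $|f|$ obtained in the proof of Proposition~\ref{reg_op_l} holds uniformly over $[0,R]^2$, so $f^{\mu_0}$ does not vanish on the sectors $\Sigma_{\pi-\varphi_0}\times\Sigma_\varphi$. Hence each quotient symbol $m_j^\mu$ converges there, as $\mu\to\mu_0$, pointwise to $m_j^{\mu_0}$, and the convergence is dominated by the uniform bound.

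Next comes the bookkeeping of the prefactors, which produces the relevant case distinctions. For \eqref{conv_l_ku} the operator $(\del-\del_0)L_\mu^{-1}$, measured in ${_0\EE^2_\infty(1,0)}$, corresponds to the symbols $\tfrac{\del-\del_0}{\del}\,m_5$ and $\tfrac{\del-\del_0}{\del}\,m_6$, whose scalar factor satisfies $0\le\tfrac{\del-\del_0}{\del}\le 1$ since $0\le\del_0\le\del$. If $\del_0>0$, this factor tends to $0$ while $m_5,m_6$ stay bounded; if $\del_0=0$, the factor equals $1$, but then $m_5^\mu=\del(\lambda+\kappa)^{3/2}/f^\mu\to 0$ and $m_6^\mu\to 0$ pointwise as $\del\to 0$, because $f^\mu\to f^{\mu_0}\neq 0$. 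In both cases the governing symbols tend to $0$ pointwise under a uniform bound, which yields \eqref{conv_l_ku}; the proof of \eqref{conv_l_st} is identical with $\sigma,m_3,m_4$ in place of $\del,m_5,m_6$. For \eqref{conv_l_3} the target is the norm of ${_0\EE^2_\infty(\mu_0)}$, into which only the blocks weighted by the fixed parameters $\del_0,\sigma_0$ enter; each relevant symbol converges pointwise to its $\mu_0$-limit by the preceding paragraph, and when $\del_0=0$ or $\sigma_0=0$ the corresponding higher-order block is simply absent from the norm.

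The main obstacle is the passage from pointwise, uniformly bounded convergence of the symbols to strong convergence of the associated operators. The mechanism is a dominated-convergence principle for the joint $H^\infty$-calculus: since $G$ and $D_n$ admit an $\RR$-bounded \HIC{} by \eqref{rh_infty_g}--\eqref{rh_infty_dn}, the theorem of Kalton and Weis \cite[Theorem~4.4]{KW01} realises each $m_j^\mu$ as an operator whose norm is controlled by $\|m_j^\mu\|_\infty$ uniformly in $\mu$. On a dense regularising subspace, on which both the $G$- and the $D_n$-calculus are represented by absolutely convergent contour integrals against the respective resolvents, one may pass to the pointwise limit of the symbols under the integral by dominated convergence. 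Strong convergence on all of ${_0\FF^3_\infty}$ then follows by density together with the uniform boundedness of Proposition~\ref{reg_op_l}, via the three-$\eps$ argument. The delicate point is that the target norms are the stronger parameter-dependent norms ${_0\EE^2_\infty(1,0)}$, ${_0\EE^2_\infty(0,1)}$, and ${_0\EE^2_\infty(\mu_0)}$, so this dominated-convergence step must be performed for the higher-order symbols $m_3,\dots,m_6$, not merely for $m_0$.
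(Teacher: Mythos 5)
Your proposal is correct in outline but follows a genuinely different route from the paper, and its central analytic step is left as a black box. The paper also passes to a dense subspace, namely $\DD(F_+^3)\hookrightarrow{_0\EE^2_\infty(1,1)}$, but on that subspace it needs no convergence of symbols at all: since $L_\mu^{-1}$ commutes with $G$ and $D_n$, the extra derivatives demanded by the ${_0\EE^2_\infty(1,0)}$- and ${_0\EE^2_\infty(0,1)}$-norms are moved onto the regular datum $f$, giving $\|L_\mu^{-1}f\|_{{_0\EE^2_\infty(1,0)}}\le C\|f\|_{{_0\EE^2_\infty(1,1)}}$ uniformly in $\mu$ via \eqref{uniform-00}, so the scalar prefactor $(\del-\del_0)\to 0$ alone yields \eqref{conv_l_ku} on the dense set, and the uniform bound $\del\|L_\mu^{-1}\|_{\LL({_0\FF^3_\infty},{_0\EE^2_\infty(1,0)})}\le C$ of Proposition~\ref{reg_op_l} finishes by the three-$\varepsilon$ argument. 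For \eqref{conv_l_3} the paper uses the identity $L_{\mu_0}^{-1}-L_\mu^{-1}=L_{\mu_0}^{-1}(L_\mu-L_{\mu_0})L_\mu^{-1}$, in which every summand of $L_\mu-L_{\mu_0}$ carries one of the small factors $(\del-\del_0)$, $(\sigma-\sigma_0)$ or $a_\pm(\mu)-a_\pm(\mu_0)$, so that \eqref{conv_l_3} reduces to \eqref{conv_l_ku}, \eqref{conv_l_st}, \eqref{uniform-00} and the continuity of $a_\pm$ --- again with no convergence lemma for the calculus. Your route instead rests on a dominated-convergence (convergence-lemma) principle for the joint $\RR$-bounded \HIC{} of $(G,D_n)$: that principle is true and provable along the lines you sketch (regularisation in both variables, absolutely convergent double contour integral, uniform $\RR$-bounds to return from the dense set), but it is not contained in \cite{KW01} or \cite{DHP03} as cited and would itself require a careful proof; moreover it is genuinely indispensable in your treatment of \eqref{conv_l_3}, where the symbols $m_0,m_1,m_2$ carry no small prefactor and the paper's resolvent identity is what lets one avoid it. What your approach buys is methodological uniformity (one principle handles all three limits, and you correctly note that for $\del_0>0$ one even gets norm convergence in \eqref{conv_l_ku}); what the paper's buys is that everything follows from the already-established uniform estimates plus elementary algebra. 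Your symbol bookkeeping for the three target norms is accurate and consistent with Proposition~\ref{reg_op_l}.
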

\goodbreak
\begin{proof}
As pointed out in part (iv) of the proof of Theorem~\ref{homog_ini}
the domain of the operator
$F_+$ in ${_0\FF^3_\infty}$ is ${_0\FF^2_\infty}$.
This implies that
\[
	\DD(F_+^3)\hookrightarrow
	{_0W}^{2-1/2p}_p(\R_+,L_p(\R^n))
	\cap L_p(\R_+,W^{4-1/p}_p(\R^n))
	={_0\EE^2_\infty(1,1)}.
\]
Now pick $f\in \DD(F_+^3)$. From Proposition~\ref{reg_op_l} we infer
that 
\begin{equation}
\label{uniform-00}
	\|L_\mu^{-1}\|_{\LL({_0\FF^3_\infty},{_0\EE^2_\infty(0,0)})}
	\le C\qquad (\mu\in[0,R]^2).
\end{equation}
This yields
\begin{eqnarray*}
	\|(\del-\del_0)L_\mu^{-1}f\|_{{_0\EE^2_\infty(1,0)}}
	&\le&C(\del-\del_0)\left(\|(G+\kappa)^{3/2}L_\mu^{-1}
	     f\|_{W^{1/2-1/2p}_p(\R_+,L_p(\R^n))}\right.\\ 
        &&   \left.\strut  +\|(G+\kappa)L_\mu^{-1}f
	     \|_{L_p(\R_+,W^{2-1/p}_p(\R^n))} 
	     \right)\\
	&\le& C(\del-\del_0)\|f\|_{_0\EE^2_\infty(1,1)}\\
	&\to& 0\qquad (\mu\to\mu_0).
\end{eqnarray*}
Since $\DD(F_+^3)$ is dense in $_0\FF^3_\infty$, 
the uniform boundedness of
 $\|L_\mu^{-1}\|_{\LL({_0\FF^3_\infty},{_0\EE^2_\infty(\mu)})}$ in $\mu\in[0,R]^2$
(which yields uniform boundedness of 
$(\delta-\delta_0)\|L_\mu^{-1}\|_{\LL({_0\FF^3_\infty},{_0\EE^2_\infty(1,0)})}$
for $\del\in [\del_0,R],\; \sigma\in [0,R]$)
implies \eqref{conv_l_ku}.
In a very similar way \eqref{conv_l_st} can be proved.
In order to see \eqref{conv_l_3} we write
\begin{eqnarray*}
	L_{\mu_0}^{-1}-L_{\mu}^{-1}
	&=&L_{\mu_0}^{-1}\left(L_{\mu}-L_{\mu_0} \right)L_{\mu}^{-1}\\
	&=&L_{\mu_0}^{-1}\left\{\left((\del-\del_0)(G+\kappa)+(\sigma-\sigma_0)D_n\right)
	   \left(\sqrt{c_+}F_+ +\sqrt{c_-}F_-\right)\right\} L_{\mu}^{-1}\\
	&+& L_{\mu_0}^{-1}\left\{(a_{+}(\mu)-a_{+}(\mu_0))\sqrt{c_{+}}F_{+}+ 
	                       (a_{-}(\mu)-a_{-}(\mu_0))\sqrt{c_{-}}F_{-} \right\} L_{\mu}^{-1}.    
\end{eqnarray*}
In view of 
$L_{\mu_0}^{-1}\in \LL({_0\FF^3_\infty},{_0\EE^2_\infty(\mu_0)})$
 this representation shows that 
\eqref{conv_l_3} is obtained as a consequence of 
\eqref{conv_l_ku}-\eqref{conv_l_st},
and \eqref{uniform-00} in conjunction with the continuity of $a_\pm$.
\end{proof}
Based on this result we will now prove
convergence of solutions of problem \eqref{LTS}-\eqref{2-rho_E}.
\begin{theorem}
\label{conv_homog_ini}
Let $3<p<\infty$, $R,T>0$, $0\le\del_0\le\del\le R$,
and $0\le\sigma_0\le\sigma\le R$. Suppose that $a$ is a function 
satisfying the conditions in \eqref{cont_cond_a} and that
\[
	((f^\mu,g^\mu,h^\mu))_{\mu\in[\del_0,R]
	\times[\sigma_0,R]}\subseteq {_0\FF_T}.
\]
Furthermore, denote by $(v^\mu,\rho^\mu,\rho_E^\mu)$ the 
unique solution
of \eqref{LTS}-\eqref{2-rho_E} whose existence is established
in Theorem~\ref{homog_ini} and that corresponds to 
the parameter $\mu=(\del,\sigma)$.
Then, if 
\begin{equation}\label{conv_homog_data}
	(f^\mu,g^\mu,h^\mu)
        \to (f^{\mu_0},g^{\mu_0},h^{\mu_0})
	\quad\mbox{in}\quad {_0\FF_T}
	\qquad (\mu\to\mu_0),
\end{equation}
we have that
\begin{equation}\label{conv_sol_homog_ini}
	(v^\mu,\rho^\mu,\rho_E^\mu)
	\to(v^{\mu_0},\rho^{\mu_0},\rho_E^{\mu_0})
	\quad\mbox{in}\quad {_0\EE_T(\mu_0)}
	\qquad (\mu\to\mu_0),
\end{equation}
where $\mu_0=(\del_0,\sigma_0)$. In particular,
if 
\[
	S_\mu^{-1}:(f,g,h)\mapsto
	(v^\mu,\rho^\mu,\rho_E^\mu)
\]
denotes the solution operator to system
\eqref{LTS}, we have that
\begin{equation}\label{conv_sol_op_s}
	S_\mu^{-1}\to S_{\mu_0}^{-1}
	\quad\mbox{strongly in}\quad \LL({_0\FF_T},
	{_0\EE^1_T}\times{_0\EE^2_T(\mu_0)}\times {_0\EE^1_T})
	\qquad (\mu\to\mu_0).
\end{equation}
\end{theorem}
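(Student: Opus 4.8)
The plan is to deduce the convergence of solutions \eqref{conv_sol_homog_ini} from the strong convergence of the resolvent operators $L_\mu^{-1}$ established in Corollary~\ref{conv_op_l}, tracing through the explicit solution representation constructed in the proof of Theorem~\ref{homog_ini}. The key structural observation is that in that proof the solution was built in three stages: first $\eta$ is obtained via $\eta = L^{-1}_\mu$ applied to the data combination appearing on the right-hand side of \eqref{form_rho}; then $u$ solves a diffusion equation \emph{whose solution operator is independent of $(\del,\sigma)$} and whose data depend on $(\del,\sigma)$ only through $\eta$ (entering as $g+\sigma\Delta_x\eta-\del(\partial_t+\kappa)\eta$); and finally $\eta_E$ solves the extension equation \eqref{rho_E-infty} driven by $\eta$ alone. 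Thus all parameter dependence funnels through $L^{-1}_\mu$ and the scalar multipliers $\sigma$, $\del$, and it suffices to control each stage.

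First I would treat the component $\rho^\mu=\eta^\mu$ directly: writing $\eta^\mu = L^{-1}_\mu \Phi(f^\mu,g^\mu,h^\mu)$, where $\Phi$ denotes the bounded linear map from $_0\FF_T$ into $_0\FF^3_\infty$ implementing the bracketed right-hand side of \eqref{form_rho} composed with the extension $\RR^c_{\!J}$ from step (v), I split
\begin{equation*}
	\eta^\mu - \eta^{\mu_0}
	= L^{-1}_\mu\big(\Phi(f^\mu,g^\mu,h^\mu)-\Phi(f^{\mu_0},g^{\mu_0},h^{\mu_0})\big)
	  +\big(L^{-1}_\mu-L^{-1}_{\mu_0}\big)\Phi(f^{\mu_0},g^{\mu_0},h^{\mu_0}).
\end{equation*}
The first term is handled by the uniform bound \eqref{uniform-00} (together with its parameter-weighted analogues from Proposition~\ref{reg_op_l}) applied to data converging to zero by \eqref{conv_homog_data}; the second term goes to zero by the strong convergence \eqref{conv_l_3}. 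Measuring everything in the target norm $\|\cdot\|_{_0\EE^2_T(\mu_0)}$ and recalling that this norm carries the weights $\del_0$ and $\sigma_0$ on the $\EE^2_T(1,0)$ and $\EE^2_T(0,1)$ pieces, the weighted convergence statements \eqref{conv_l_ku}--\eqref{conv_l_st} are exactly what is needed to absorb the $(\del-\del_0)$ and $(\sigma-\sigma_0)$ factors that arise when comparing the $\mu$-weighted norm to the $\mu_0$-weighted one.

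Next I would pass to $v^\mu=u^\mu$ and $\rho_E^\mu=\eta_E^\mu$. Since the diffusion solution operator is parameter-independent, $v^\mu-v^{\mu_0}$ equals that fixed operator applied to $(f^\mu-f^{\mu_0})$, to $(g^\mu-g^{\mu_0})$, and to the interface-driven terms $\sigma\Delta_x\eta^\mu-\sigma_0\Delta_x\eta^{\mu_0}$ and $\del(\partial_t+\kappa)\eta^\mu-\del_0(\partial_t+\kappa)\eta^{\mu_0}$; the first two converge by \eqref{conv_homog_data} and the latter two by the $\EE^2_T(0,1)$- and $\EE^2_T(1,0)$-convergence of $\sigma\eta^\mu$ and $\del\eta^\mu$ just established, so $v^\mu\to v^{\mu_0}$ in $_0\EE^1_T$. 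The argument for $\eta_E^\mu$ is even simpler, as \eqref{rho_E-infty} is driven by $\eta^\mu$ through its trace and $\|\eta_E^\mu-\eta_E^{\mu_0}\|_{_0\EE^1_T}\le C\|\eta^\mu-\eta^{\mu_0}\|_{_0\FF^2_T}$, controlled by the embedding $_0\EE^2_T(0,0)\hookrightarrow{_0\FF^2_T}$. The reformulation \eqref{conv_sol_op_s} in terms of $S^{-1}_\mu$ then follows because linearity and the uniform operator bounds make the preceding pointwise-in-data convergence a strong operator convergence.

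The main obstacle I anticipate is bookkeeping the interplay between the \emph{parameter-dependent target norm} $\EE^2_T(\mu_0)$ and the weights $\sigma$, $\del$ hidden inside the data of the diffusion problem for $v^\mu$. The delicate point is that one must not estimate $\sigma\|\eta^\mu\|_{\EE^2_T(0,1)}$ by the full $\sigma$-norm but rather track the difference $\sigma\Delta_x\eta^\mu-\sigma_0\Delta_x\eta^{\mu_0}$, which requires both the strong convergence $\eta^\mu\to\eta^{\mu_0}$ in $\EE^2_T(\mu_0)$ \emph{and} the separate fact that $(\sigma-\sigma_0)\|\eta^\mu\|_{\EE^2_T(0,1)}\to0$ coming from \eqref{conv_l_st}; conflating these would produce a spurious term that does not vanish in the degenerate limits where $\sigma_0=0$ but $\sigma>0$. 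Handling the boundary cases $\del_0=0$ or $\sigma_0=0$ uniformly — precisely the singular-limit regimes (4) and (5) — is where the weighted convergence of Corollary~\ref{conv_op_l}, rather than mere uniform boundedness, is indispensable, and I would organize the estimates so that every parameter-weighted factor is matched against a correspondingly weighted convergence statement.
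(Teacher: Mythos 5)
Your proposal is correct and follows essentially the same route as the paper: reduce to the half-line representation, obtain the convergence of $\eta^\mu=L_\mu^{-1}\ell^\mu$ from the uniform bounds of Proposition~\ref{reg_op_l} and Corollary~\ref{conv_op_l}, exploit the parameter-independence of the Dirichlet diffusion solver for $u^\mu$ and of the extension operator for $\eta_E^\mu$, and carefully match each weight $(\del-\del_0)$, $(\sigma-\sigma_0)$ against the corresponding weighted convergence statement. The only (immaterial) difference is that the paper packages the pair $(u^\mu,\eta^\mu)$ jointly through the inverse $U_{\mu_0}^{-1}$ of the solution-to-data map, whereas you treat $\eta^\mu$ first via the resolvent identity behind \eqref{conv_l_3} and then propagate into the $u$-equation.
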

\begin{proof}
In view of the arguments in part (v) of the proof of 
Theorem~\ref{homog_ini} the solution 
$(v^\mu,\rho^\mu,\rho_E^\mu)$ can be represented by
\begin{equation}\label{rep_64}
	(v^\mu,\rho^\mu,\rho_E^\mu):=
	({\mathcal R_{\!J}}(e^{\kappa t}u^\mu), 
	{\mathcal R_{\!J}}(e^{\kappa t}\eta^\mu),
	{\mathcal R_{\!J}}(e^{\kappa t}\eta_E^\mu)),
\end{equation}
where ${\mathcal R_{\!J}}$ denotes the restriction operator
and $(u^\mu,\eta^\mu,\eta_E^\mu)$ is the solution of 
\eqref{LOS-infty}--\eqref{rho_E-infty} with right hand side
$({\mathcal R}^c_{\!J}(f^\mu,g^\mu,h^\mu))$ 
and ${\mathcal R}^c_{\!J}$
as defined in \eqref{def_retraction}. Hence we see 
that it suffices to prove convergence for the vector
$(u^\mu,\eta^\mu,\eta_E^\mu)$.
Clearly, \eqref{conv_homog_data} implies
that 
\[	
	({\mathcal R}^c_{\!J}(f^\mu,g^\mu,h^\mu))
        \to ({\mathcal R}^c_{\!J}(f^{\mu_0},g^{\mu_0},h^{\mu_0}))
	\quad\mbox{in}\quad {_0\FF_\infty}
	\qquad (\mu\to\mu_0).
\]
Therefore, and for simplicity, we simlpy write 
$(f^\mu,g^\mu,h^\mu)$
for the data instead of 
$({\mathcal R}^c_{\!J}(f^\mu,g^\mu,h^\mu))$
in the remaining part of the proof.

Next, recall from \eqref{form_rho}
that $\eta^\mu$ is given by
\[
	\eta^\mu=L_\mu^{-1}\ell^\mu
	\qquad
	(\mu\in[\del_0,\infty)\times[\sigma_0,\infty))
\]
with
\begin{equation*}
\begin{split}
        \ell^\mu=&
             h^\mu-\int_0^\infty \e^{-F_+ s/\sqrt{c_+}}
             (f^\mu)^+(s)\dd s
             -\int_0^\infty \e^{-F_- s/\sqrt{c_-}}
             (f^\mu)^-(-s)\dd s\\
	     &\strut + \sqrt{c_+}F_+ g^\mu
	     +\sqrt{c_-}F_- g^\mu.
\end{split}
\end{equation*}
According to \eqref{F^2toF^3} we know that 
$F_{\pm}\in {\rm Isom}({_0\FF^2_\infty},{_0\FF^3_\infty})$. This fact
and relations \eqref{iff} and \eqref{iff1} then imply, by virtue of
assumption \eqref{conv_homog_data}, that
\begin{eqnarray*}
	\|\ell^\mu-\ell^{\mu_0}\|_{_0\FF^3_\infty}
	&\le& C\left(
	      \|f^\mu-f^{\mu_0}\|_{\FF^1_T}
	      +\|g^\mu-g^{\mu_0}\|_{_0\FF^2_T}
	      +\|h^\mu-h^{\mu_0}\|_{_0\FF^3_T}
	      \right)\\
	&\to& 0\qquad\qquad (\mu\to\mu_0).
\end{eqnarray*}
By the uniform boundedness of 
$\|L_\mu^{-1}\|_{\LL({_0\FF^3_T},{_0\EE^2_T(\mu)})}$ 
in $\mu\in[\del_0,R]\times[\sigma_0,R]$ 
(see Proposition~\ref{reg_op_l}) and because
$(\del-\del_0)L_\mu^{-1}\to 0$ strongly in
$\LL({_0\FF^3_T},{_0\EE^2_T(1,0)})$ and
$(\sigma-\sigma_0)L_\mu^{-1}\to 0$ strongly in
$\LL({_0\FF^3_T},{_0\EE^2_T(0,1)})$ 
(see Corollary~\ref{conv_op_l}) this results in
\begin{equation}\label{del_eta_to_0}
	(\del-\del_0)\eta^\mu\to 0
	\quad\mbox{in}\quad {_0\EE^2_\infty(1,0)}
\end{equation}
and
\begin{equation}\label{sigma_eta_to_0}
	(\sigma-\sigma_0)\eta^\mu\to 0
	\quad\mbox{in}\quad {_0\EE^2_\infty(0,1)}.
\end{equation}
Now, denote by
\[
        U_\mu: (u^\mu,\eta^\mu)\mapsto
        (f^\mu,g^\mu,h^\mu)
\]
the operator that maps the solution to the data
corresponding to system \eqref{LOS-infty}.
From part (iv) of the proof of 
Theorem~\ref{homog_ini} we infer that
\begin{equation}\label{isom_u}
        U_\mu\in {\rm Isom}({_0\EE^1_\infty}\times
	{_0\EE^2_\infty(\mu)},{_0\FF_\infty})
        \qquad (\mu\in [\del_0,R]\times[\sigma_0,R]).
\end{equation}
Furthermore, observe that we have
\begin{eqnarray*}
        &&(u^\mu,\eta^\mu)-(u^{\mu_0},\eta^{\mu_0})\\
        &&=U_{\mu}^{-1}(f^\mu,g^\mu,h^\mu)
           -U_{\mu_0}^{-1}(f^{\mu_0},g^{\mu_0},h^{\mu_0})\\
        &&=U_{\mu_0}^{-1}
        \left(
        \begin{array}{c}
        f^\mu-f^{\mu_0}\\
        g^\mu-g^{\mu_0}
        +(\sigma-\sigma_0)\Delta_x\eta^\mu
        -(\del-\del_0)(\partial_t+\kappa)\eta^\mu\\
        h^\mu-h^{\mu_0}
	+[\![c\gamma\partial_y(a(\mu)-a(\mu_0))\eta^\mu_E ]\!]
        \end{array}
        \right)^T.
\end{eqnarray*}
Relation \eqref{isom_u} applied for $\mu=\mu_0$ then yields
\begin{eqnarray*}
	&&\|(u^\mu,\eta^\mu)-(u^{\mu_0},\eta^{\mu_0})
	\|_{{_0\EE^1_\infty}\times{_0\EE^2_\infty}(\mu_0)}\\
        && \le C\biggl(\|(f^\mu,g^\mu,h^\mu)
           -(f^{\mu_0},g^{\mu_0},h^{\mu_0})\|_{{_0\FF_\infty}}
            +(\del-\del_0)\|\eta^\mu\|_{_0\EE^2_\infty(1,0)}
	   \biggr.
           \\
        && \biggl.\quad\strut
            +(\sigma-\sigma_0)
	    \|\eta^{\mu}\|_{_0\EE^2_\infty(0,1)}
            +|a(\mu)-a(\mu_0)|\|\eta^\mu_E\|_{_0\EE^1_T}
	    \biggr).
\end{eqnarray*}
From Theorem~\ref{homog_ini} we know that
$\|\eta^\mu_E\|_{_0\EE^1_T}$ is uniformly bounded
in $\mu\in I_0$.
Thus, by \eqref{cont_cond_a}, 
\eqref{del_eta_to_0}, \eqref{sigma_eta_to_0},
and assumption \eqref{conv_homog_data} we conclude that 
\[
	(u^\mu,\eta^\mu)
	\to(u^{\mu_0},\eta^{\mu_0})
	\quad\mbox{in}\quad {_0\EE^1_\infty}
	\times{_0\EE^2_\infty(\mu_0)}
	\qquad (\mu\to\mu_0).
\]
The convergence of $\eta_{E}^\mu$ is easily obtained as a consequence
of the convergence of $\eta^\mu$.
Recall that $\eta_{E}^\mu$ is the solution of 
\eqref{2-rho_E} with $\rho$ replaced by $\eta^\mu$. Denote by 
$\T$ the solution operator of this diffusion equation which 
is obviously independent of $\mu$. Then 
by \cite[~Proposition 5.1]{EPS03} we obtain 
\begin{eqnarray}
	\|\eta_{E}^\mu-\rho_{E}^{\mu_0}\|_{_0\EE^1_\infty}
	&=&\|\T(0,\eta^\mu-\eta^{\mu_0},0)\|_{_0\EE^1_\infty}
	      \nonumber\\
	&\le& C
	      \|\eta^\mu-\eta^{\mu_0}\|_{_0\FF^2_\infty}
	      \nonumber\\
	&\le& C
	      \|\eta^\mu-\eta^{\mu_0}\|_{_0\EE^2_\infty(0,0)}
	      \nonumber\\
	&\to& 0\qquad\qquad (\mu\to\mu_0),
	\label{est_34}
\end{eqnarray}
by the just established convergence of $\eta^\mu$. 
Representation \eqref{rep_64} then implies 
\eqref{conv_sol_homog_ini}.

Obviously \eqref{conv_sol_homog_ini} is still true for
fixed data, i.e., if 
\[
	(f^\mu,g^\mu,h^\mu)
	=(f,g,h)\in {_0\FF_T}
	\qquad (\mu\in[\del_0,R]\times[\sigma_0,R]).
\]
Hence \eqref{conv_sol_op_s} readily follows from 
\eqref{conv_sol_homog_ini}.
\end{proof}
\subsection{Inhomogeneous time traces}\label{inhom_traces}
Next we consider the fully inhomogeneous 
system \eqref{LTS}--\eqref{2-rho_E} and we will prove
Theorem~\ref{main_lin}.
By introducing appropriate auxiliary functions,
we will reduce this problem
to the situation of Theorem~\ref{homog_ini}.
\vspace{4mm}
\begin{proof} (of Theorem~\ref{main_lin}.)  
If $\del=\sigma=0$ this result is proved in 
\cite[Theorem~3.4]{PSS07}\footnote{Actually with $g=0$.
But by obvious changes in the proof one can obtain
the result also for $0\neq g\in\FF^2_T$.}.
So, we may assume that $\del>0$ or $\sigma>0$ which implies
that $\rho_0\in\rcsisu$. Furthermore, it 
follows from the trace results in \cite{DPZ08}
that the conditions listed in 
\eqref{assum_ml_a}--\eqref{assum_ml_c} are necessary.

Suppose we had a solution $(v,\rho,\rho_E)$ 
of \eqref{LTS}--\eqref{2-rho_E} 
as claimed in the statement of Theorem~\ref{main_lin}.
Let $v_1$ be the solution of the two-phase 
diffusion equation
\begin{equation}
\label{diffusion}
    \left\{
        \begin{array}{r@{\quad=\quad}ll}
        (\partial_t-c\Delta )v_1 
                &f      & \mbox{in}\ J\times\dR^{n+1},\\
        \gamma v_1^{\pm} & g+\e^{-(1-\Delta_x)t}\zeta 
	& \mbox{on}\ J\times\R^n,\\
        v_1(0)  & v_0
        & \mbox{in}\ \dR^{n+1},\\
        \end{array}
        \right.
\end{equation}
with 
\begin{equation}
\label{def_zeta}
	\zeta:=\gamma v_0-g(0).
\end{equation}
Observe that by compatibility assumption~\eqref{assum_ml_b} we have 
\begin{equation}
\label{zeta}
	\zeta
	=(\sigma\Delta_x\rho-\del\partial_t\rho)|_{t=0}.
\end{equation}
Next let $\rho_1$ be an
extension function so that
\begin{equation}
\label{ABC}
\left(\rho_1(0),\partial_t\rho_1(0)\right):=
\left(\rho_0,h(0)-[\![c\gamma\partial_y(v_0
-a e^{-|y|(1-\Delta_x)^{1/2}}\rho_0)]\!]\right),
\end{equation}
as constructed in Lemma~\ref{ext_ext}, 
and let $\rho_{1,E}$
be the solution of \eqref{2-rho_E},
with $\rho$ replaced by $\rho_1$.
For the solvability of \eqref{diffusion} and the existence of
$\rho_1$ we have to check the required regularity and
compatibility conditions for the data.
By construction we have that $g(0)+\zeta=\gamma v_0$ 
and by the regularity assumptions on $g$ and $v_0$ we deduce
\[
	\zeta=\gamma v_0-g(0)\in W^{2-3/p}_p(\R^n),
\]
hence that
\begin{equation}\label{23}
	e^{-(1-\Delta_x)t}\zeta
	\in \FF^2_T.
\end{equation}
Then it follows from \cite[~Proposition 5.1]{EPS03}
that there is a unique solution $v_1\in\EE^1_T$ of
\eqref{diffusion}. 
Furthermore, if $\del>0$, we may use compatibility condition
\eqref{assum_ml_b} to obtain that
\[
	h(0)-[\![c\gamma\partial_y(v_0
	-a e^{-|y|(1-\Delta_x)^{1/2}}\rho_0)]\!]
	=\frac1{\del}(g(0)-\gamma v_0+\sigma\Delta_x\rho_0)
	\in W^{2-3/p}_p(\R^n).
\]
If $\del=0$, we may impose $\sigma>0$ which gives
\[
	c\gamma\partial_y
	a e^{-|y|(1-\Delta_x)^{1/2}}\rho_0
	=\mp ca(1-\Delta_x)^{1/2}\rho_0
	\in W^{3-3/p}_p(\R^n)\hookrightarrow 
	W^{2-6/p}_p(\R^n)
\]
in view of $\rho_0\in\rcsisu$.
Assumption~\eqref{assum_ml_c} then implies that
\[
	h(0)-[\![c\gamma\partial_y(v_0
	-a e^{-|y|(1-\Delta_x)^{1/2}}\rho_0)]\!]
	\in W^{2-6/p}_p(\R^n).
\]
Thus, in any case we can satisfy the assumptions of 
Lemma~\ref{ext_ext} which yields the existence of
$\rho_1\in\EE^2_T(\del,\sigma)$
as claimed, and of $\rho_{1,E}\in\EE^1_T$
by virtue of Remark~\ref{Bemerkungen}(b).

Now we set 
\[
	(v_2,\rho_2,\rho_{2,E})
	=(v,\rho,\rho_E)-
	 (v_1,\rho_1,\rho_{1,E}).
\]
It is clear that $\rho_{2,E}$ is the extension of $\rho_2$
given by \eqref{2-rho_E} with $\rho$ replaced by $\rho_2$.
Thus, $(v_2,\rho_2,\rho_{2,E})$ satisfies
\begin{equation}
\label{reduced}
        \left\{
        \begin{array}{r@{\quad=\quad}ll}
        (\partial_t-c\Delta)v_2
        & 0     & \mbox{in}\ J\times\dR^{n+1},\\
        \gamma v_2^\pm -\sigma\Delta_x\rho_2
	+\del\partial_t\rho_2
	&\sigma\Delta_x\rho_1-\del\partial_t\rho_1 
	-e^{-(1-\Delta_x)t}\zeta& \mbox{on}\ J\times \R^n, \\
        \partial_t\rho_2+[\![c\gamma\partial_y (v_2-a\rho_{2,E})]\!] 
        & h-\partial_t\rho_1 -[\![c\gamma\partial_y(v_1-a\rho_{1,E})]\!]      
        & \mbox{on}\ J\times\R^n, \\
        v_2(0) & 0 & \mbox{in}\ \dR^{n+1},\\
        \rho_2(0) & 0 &  \mbox{in}\ \R^n,\\
        \end{array}
        \right.
\end{equation}
and
\begin{equation}
\label{ABB}
    \left\{
        \begin{array}{r@{\quad=\quad}ll}
        (\partial_t-c\Delta)\rho_{2,E} 
                &0
        & \mbox{in}\ J\times\dR^{n+1},\\
        \gamma \rho_{2,E}^\pm & \rho_2 & \mbox{on}\ J\times\R^n,\\
          \rho_{2,E}(0)  & 0      & \mbox{in}\ \dR^{n+1}.\\
        \end{array}\qquad\quad
        \right.
\end{equation}
By construction, $\rho_1\in\EE^2_T(\del,\sigma)$, and by 
(\ref{23}) one may readily check that
\[
	\sigma\Delta_x\rho_1-\del\partial_t\rho_1 
	-e^{-(1-\Delta_x)t}\zeta\in {_0\FF_T^2}
\]
and that
\[
	h-\partial_t\rho_1 -[\![c\gamma\partial_y(v_1-a\rho_{1,E})]\!]
	\in{_0\FF^3_T}.
\]
Thus, by Theorem~\ref{homog_ini} the reduced system 
\eqref{reduced}--\eqref{ABB}
is uniquely solvable. 
This allows us to reverse the argument.  
In fact, since the solution $v_1$ of \eqref{diffusion}
and the extension $\rho_1$ depend on the data only,
the right hand side of 
\eqref{reduced}--\eqref{ABB} so does as well.
Theorem~\ref{homog_ini} now yields a unique solution 
$(v_2,\rho_2,\rho_{2,E})\in{_0\EE_T(\del,\sigma)}$
and 
\begin{equation}\label{splitting}
	(v,\rho,\rho_E)
	:=(v_2,\rho_2,\rho_{2,E})+
	 (v_1,\rho_1,\rho_{1,E})
\end{equation}
then solves the original system
\eqref{LTS}--\eqref{2-rho_E} in the reguarity classes required.
It remains to verify estimate~\eqref{max_reg_ineq_inhom}.
Observe that by Theorem~\ref{homog_ini} we know that
\begin{eqnarray*}
	&&\hspace{-3mm}\|(v_2,\rho_2,
	\rho_{2,E})\|_{_0\EE_T(\del,\sigma)}\\
	&&\hspace{-3mm}\le C\left(\|\sigma\Delta_x\rho_1
	-\del\partial_t\rho_1 
	-e^{-(1-\Delta_x)t}\zeta\|_{_0\FF^2_T}
	+\|h-\partial_t\rho_1 
	-[\![c\gamma\partial_y(v_1-a\rho_{1,E})]\!]\|_{_0\FF^3_T}\right)
\end{eqnarray*}
with $C>0$ independent of $\del,\sigma$. By 
$|a(\mu)|\le C$ for $\mu\in [0,R]^2$ and 
the facts pointed out 
above we can continue this calculation to the result
\begin{eqnarray}
	&&\hspace{-8mm}\|(v_2,\rho_2,\rho_{2,E})\|_{_0\EE_T(\del,\sigma)}
	\nonumber\\
	&&\hspace{-8mm}\le C\left(\sigma\|\Delta_x\rho_1\|_{\FF^2_T}
	+\del\|\partial_t\rho_1\|_{\FF^2_T} 
	+\|\zeta\|_{W^{2-3/p}_p(\R^n)}
	+\|h\|_{\FF^3_T}\right. 	\nonumber\\ 
	&&\hspace{-8mm} 	\left. \quad
	\strut+\|\partial_t\rho_1\|_{\FF^3_T} 
	+\|v_1-a\rho_{1,E}\|_{\EE^1_T}\right)\nonumber\\
	&&\hspace{-8mm}\le C\left(\|(v_1,\rho_1,
	\rho_{1,E})\|_{\EE_T(\del,\sigma)}
	+\|(0,g,h,v_0,0)\|_{\FF_T(0,0)}
	\right).
	\label{est_red_sol}
\end{eqnarray}
Hence we see that it remains to derive suitable estimates for 
$(v_1,\rho_1,\rho_{1,E})$.
Observe that equation \eqref{diffusion} does not
depend on $\del,\sigma$. 
By \cite[~Proposition 5.1]{EPS03} we deduce
\begin{eqnarray}
	\|v_1\|_{_0\EE^1_T}
	&\le& C\left(\|f\|_{\FF^1_T}
	+\|g+e^{-(1-\Delta_x)t}\zeta\|_{\FF^2_T} 
	+\|v_0\|_{\FF^4_T}\right)\nonumber\\
	&\le&C\left(\|f\|_{\FF^1_T}
	+\|g\|_{\FF^2_T} 
	+\|v_0\|_{\FF^4_T}\right)
	\qquad(0\le\del,\sigma\le R).
	\label{est_v_1}
\end{eqnarray}
By the same argument we also have
\begin{eqnarray}
	\|\rho_{1,E}\|_{_0\EE^1_T}
	&\le&C\left(\|\rho_1\|_{\FF^2_T}
	+\|e^{-|y|(1-\Delta_x)^{1/2}}\rho_0\|_{\rcvitp}\right)
	\nonumber\\
	&\le&C\left(\|\rho_1\|_{\EE^2_T(0,0)}
	+\|\rho_0\|_{\rcsi}\right)
	\qquad(0\le\del,\sigma\le R),
	\label{est_rho_1e}
\end{eqnarray}
where we used Remark~\ref{Bemerkungen}(b) and the 
embeddings 
$W^{2-2/p}_p(\R^n)\hookrightarrow W^{2-3/p}_p(\R^n)$ and
 $\EE^2_T(0,0)\hookrightarrow\FF^2_T$.
Lemma~\ref{ext_ext} implies for $\rho_1$, 
\begin{eqnarray}
	&&\hspace{-5mm}\|\rho_1\|_{_0\EE^2_T(0,0)}\nonumber\\
	&&\hspace{-5mm}\le C\left(\|\rho_0\|_{W^{2-2/p}_p(\R^n)}
	+\|h(0)-[\![c\gamma\partial_y (v_0
	-a e^{-|y|(1-\Delta_x)^{1/2}}\rho_0]\!]\|_{W^{1-3/p}_p(\R^n)}
	\right)\nonumber\\
	&&\hspace{-5mm}\le C\left(\|\rho_0\|_{W^{2-2/p}_p(\R^n)}
	+\|h\|_{\FF^3_T}
	+\|v_0\|_{\FF^4_T}
	+\|(1-\Delta_x)^{1/2}\rho_0]\|_{W^{1-3/p}_p(\R^n)}
	\right)\nonumber\\
	&&\hspace{-5mm}\le C\left(\|\rho_0\|_{W^{2-2/p}_p(\R^n)}
	+\|h\|_{\FF^3_T}
	+\|v_0\|_{\FF^4_T}
	\right)
	\label{est_rho_1_a}
\end{eqnarray}
and 
\begin{eqnarray}
	&&\hspace{-5mm}\sigma\|\rho_1\|_{_0\EE^2_T(0,1)}\nonumber\\
	&&\hspace{-5mm}\le C\left(\sigma\|\rho_0\|_{W^{4-3/p}_p(\R^n)}
	+\sigma\|h(0)-[\![c\gamma\partial_y (v_0
	-a e^{-|y|(1-\Delta_x)^{1/2}}\rho_0)]\!]\|_{W^{2-6/p}_p(\R^n)}
	\right)\nonumber\\
	&&\hspace{-5mm}\le C\left(\sigma\|\rho_0\|_{W^{4-3/p}_p(\R^n)}
	+\sigma\|h(0)-[\![c\gamma\partial_y v_0]\!]\|_{W^{2-6/p}_p(\R^n)}
\right)
\label{est_rho_1_b}
\end{eqnarray}
as well as
\begin{eqnarray}
	&&\del\|\rho_1\|_{_0\EE^2_T(1,0)}\nonumber\\
	&&\le C\left(\del\|\rho_0\|_{W^{4-3/p}_p(\R^n)}
	+\del\|h(0)-[\![c\gamma\partial_y (v_0
	-a e^{-|y|(1-\Delta_x)^{1/2}}\rho_0)]\!]\|_{W^{2-3/p}_p(\R^n)}
	\right)\nonumber\\
	&&\le C\left(\del\|\rho_0\|_{W^{4-3/p}_p(\R^n)}
	+
	\|g\|_{\FF^2_T}+\|v_0\|_{\FF^4_T}
	+\sigma\|\rho_0\|_{\rcsisu}\right),
	\label{est_rho_1_c}
\end{eqnarray}
for $0\le\del,\sigma\le R$,
where we used in \eqref{est_rho_1_c} once again compatibility
condition \eqref{assum_ml_b}.
Inserting \eqref{est_rho_1_a} into \eqref{est_rho_1e}
we obtain by \eqref{est_v_1}--\eqref{est_rho_1_c} that
\begin{eqnarray}
	\|(v_1,\rho_1,\rho_{1,E})\|_{\EE_T(\del,\sigma)}
	&\le& C\left(
	\|(f,g,h,v_0,\rho_0)\|_{\FF_T(0,0)}
	+(\del+\sigma)\|\rho_0\|_{W^{4-3/p}_p(\R^n)}
	\right.\nonumber\\
	&&\left.\strut+
	\sigma\|h(0)-[\![c\gamma\partial_y v_0 ]\!]\|_{W^{2-6/p}_p(\R^n)}
	\right)
	\label{est_all_3}
\end{eqnarray}
for $0\le\del,\sigma\le R$.
Inserting \eqref{est_all_3} into 
\eqref{est_red_sol} we can derive exactly the same estimate
for $(v_2,\rho_2,\rho_{2,E})$.
Combining the estimates for 
$(v_1,\rho_1,\rho_{1,E})$ and $(v_2,\rho_2,\rho_{2,E})$
we finally arrive at \eqref{max_reg_ineq_inhom} and the
proof is complete.
\end{proof}
Next we prove convergence for the solutions of
problem \eqref{LTS}--\eqref{2-rho_E}, that is, 
Theorem~\ref{conv_main_lin}. 
\begin{proof} (of Theorem~\ref{conv_main_lin}.)\\
We employ the decomposition
\[
	\rho^\mu=\rho_1^\mu+\rho_2^\mu
\]
as given in \eqref{splitting}.
We have to show that
\begin{itemize}
\item[(i)] $(v_1^\mu,\rho_1^\mu,\rho_{1,E}^\mu)\to
	(v_1^{\mu_0},\rho_1^{\mu_0},\rho_{1,E}^{\mu_0})$
	\ in \ $\EE_T(\mu_0)$,
\medskip
\item[(ii)] $(v_2^\mu,\rho_2^\mu,\rho_{2,E}^\mu)\to
	(v_2^{\mu_0},\rho_2^{\mu_0},\rho_{2,E}^{\mu_0})$
	\ in \ $\EE_T(\mu_0)$.
\end{itemize}
(i)\ \ We start with proving  convergence of $\rho_1^\mu$. 
This function is according to \eqref{ABC} an extension
of the traces
\[
	\left(\rho_1^\mu(0),\partial_t\rho_1^\mu(0)\right):=
	\left(\rho_0^\mu,q_0^\mu\right),
\]
where we set
\begin{equation}\label{def_q}
	q_0^\mu:=h^\mu(0)-[\![c\gamma\partial_y(v_0^\mu
	-a e^{-|y|(1-\Delta_x)^{1/2}}\rho_0^\mu)]\!].
\end{equation}
Since the extension operator in Lemma~\ref{ext_ext} is linear
and independent of $\mu$ we can estimate for all $\mu\in I_0$, 
\begin{eqnarray}
	\|\rho_1^{\mu}-\rho_1^{\mu_0}\|_{\EE^2_T(\mu_0)}
	&\le& C\left(\|\rho_0^{\mu}-\rho_0^{\mu_0}\|_{\FF^5_T(\mu_0)}
	   +\|q_0^\mu-q_0^{\mu_0}\|_{\FF^6_T(\mu_0)}\right),
	\label{66}
\end{eqnarray}
where
\[
	\FF^6_T(\mu_0):=W^{1-3/p}_p(\R^n)
	    \cap W^{\sg(\sigma_0)(2-6/p)}_p(\R^n)
	\cap W^{\sg(\del_0)(2-3/p)}_p(\R^n).
\]
It is clear by \eqref{conv_ass_inhom_1}
that the first term on the right hand side of
\eqref{66} tends to zero.
In order to see the convergence of the second 
term we distinguish the three cases $\del_0=\sigma_0=0$,
and $\del_0>0,\sigma_0\ge 0$, and $\del_0=0,\sigma_0> 0$.\\[1mm]
The case $\del_0=\sigma_0=0$: \ Here we have 
$\FF^6_T(\mu_0)=W^{1-3/p}_p(\R^n)$ and we obtain by a direct
estimate and \eqref{conv_ass_inhom_1} that
\begin{eqnarray*}
	&&\|q_0^\mu-q_0^{\mu_0}\|_{W^{1-3/p}_p(\R^n)}\\
	&&\le C\left(
	      \|h^\mu-h^{\mu_0}\|_{\FF^3_T}
	       +\|v_0^\mu-v_0^{\mu_0}\|_{\FF^4_T}
	      +\|\rho_0^\mu-\rho_0^{\mu_0}\|_{W^{2-3/p}_p(\R^n)}
	      \right)\\
  	&&\to 0\qquad\qquad(\mu\to\mu_0).
\end{eqnarray*}
The case $\del_0>0,\sigma_0\ge 0$: \ Then 
$\FF^6_T(\mu_0)=W^{2-3/p}_p(\R^n)$. In this case we can employ
compatibility condition \eqref{assum_ml_b} in Theorem~\ref{main_lin}
which results in
\begin{eqnarray}
	&&\hspace{-8mm}\|q_0^\mu-q_0^{\mu_0}\|_{W^{2-3/p}_p(\R^n)}\nonumber\\
	&&\hspace{-8mm} =\left\|\frac{1}{\del}\left(
	    g^\mu(0)-\gamma v_0^\mu+\sigma\Delta_x
	    \rho_0^\mu\right)
	    -\frac{1}{\del_0}\left(
	    g^{\mu_0}(0)-\gamma v_0^{\mu_0}+\sigma\Delta_x
	    \rho_0^{\mu_0}\right)
	    \right\|_{W^{2-3/p}_p(\R^n)}\nonumber\\
	&&\hspace{-8mm}    \le C\left(
	    \left\|\frac{1}{\del}g^\mu
	    -\frac{1}{\del_0}g^{\mu_0}\right\|_{\FF^2_T}
	    +\left\|\frac{1}{\del}v_0^\mu
	    -\frac{1}{\del_0}v_0^{\mu_0}\right\|_{\FF^4_T}
	    +\left\|\frac{\sigma}{\del}\rho_0^\mu
	    -\frac{\sigma_0}{\del_0}
	    \rho_0^{\mu_0}\right\|_{\FF^5_T(\mu_0)}\right).
	\label{456}
\end{eqnarray}
In view of $\del_0>0$ observe that $\rho_0^\mu\to
\rho_0^{\mu_0}$ in $\FF^5_T(\mu_0)=W^{4-3/p}_p(\R^n)$ 
by \eqref{conv_ass_inhom_1}. This yields
\begin{eqnarray*}
	\left\|\frac{\sigma}{\del}\rho_0^\mu
	    -\frac{\sigma_0}{\del_0}
	    \rho_0^{\mu_0}\right\|_{\FF^5_T(\mu_0)}
	&\le& \frac{\sigma}{\del}
	    \|\rho_0^\mu
	    -\rho_0^{\mu_0}\|_{\FF^5_T(\mu_0)}
	    +\left(\frac{\sigma}{\del}-\frac{\sigma_0}{\del_0}\right)
	    \|\rho_0^{\mu_0}\|_{\FF^5_T(\mu_0)}.\\
	&\to& 0\qquad\qquad(\mu\to\mu_0).
\end{eqnarray*}
In the same way we see that the first and the second term 
on the right hand side of \eqref{456} vanish for $\mu\to\mu_0$.
\\[1mm]
The case $\del_0=0,\sigma_0> 0$: \ 
Since $\del\to 0$, here we cannot apply compatibility condition
\eqref{assum_ml_b}. 
This leads to condition \eqref{conv_ass_inhom_2} 
in the statement of the theorem.
In fact, here we obtain
\begin{align*}
	\|q_0^\mu-q_0^{\mu_0}\|_{W^{2-6/p}_p(\R^n)}
	\le&\, C\left(\left\|h^\mu(0)-[\![c\gamma\partial_y v_0^\mu ]\!]
	    -h^{\mu_0}(0)+ [\![c\gamma\partial_y v_0^{\mu_0}]\!]
	    \right\|_{W^{2-6/p}_p(\R^n)}\right.\\
	& \left.\strut 
	      +\|\rho_0^\mu-\rho_0^{\mu_0}\|_{W^{3-3/p}_p(\R^n)}
	      \right).
\end{align*}
It is clear that for $\sigma_0>0$ condition 
\eqref{conv_ass_inhom_2} implies that the first term on the 
right hand side vanishes, whereas the second term 
tends to zero again by \eqref{conv_ass_inhom_1}. 

Also here the convergence of 
$\rho_{1,E}^\mu$ follows by the convergence of $\rho_1^\mu$
in view of the fact that $\rho_{1,E}^\mu$ is the solution of 
\eqref{2-rho_E} with $\rho$ replaced by $\rho_1^\mu$. If
$\T$ denotes again the solution operator of this 
diffusion equation, by \cite[~Proposition 5.1]{EPS03} we 
obtain 
\begin{eqnarray}
	\|\rho_{1,E}^\mu-\rho_{1,E}^{\mu_0}\|_{\EE^1_T}
	&=&\|\T(0,\rho_1^\mu-\rho_1^{\mu_0},
		e^{-|y|(1-\Delta_x)^{-1/2}}
	      (\rho_0^\mu-\rho_0^{\mu_0}))\|_{\EE^1_T}
	      \nonumber\\
	&\le& C\left(
	      \|\rho_1^\mu-\rho_1^{\mu_0}\|_{\FF^2_T}
	      +\|e^{-|y|(1-\Delta_x)^{-1/2}}
	      (\rho_0^\mu-\rho_0^{\mu_0})\|_{\FF^4_T}
	      \right)\nonumber\\
	&\le& C\left(
	      \|\rho_1^\mu-\rho_1^{\mu_0}\|_{\EE^2_T(0,0)}
	      +\|\rho_0^\mu-\rho_0^{\mu_0}\|_{W^{2-3/p}_p(\R^n)}
	      \right)\nonumber\\
	&\to& 0\qquad\qquad (\mu\to\mu_0),
	\label{est_35}
\end{eqnarray}
by the just proved convergence of $\rho_1^\mu$ and 
\eqref{conv_ass_inhom_1}.

Observe that $v_1^\mu$ is, according to
\eqref{diffusion}, the solution of 
the same diffusion equation with right hand side
$(f^\mu,g^\mu+e^{-(1-\Delta_x)t}(\gamma v_0^\mu-g^\mu(0)),v_0^\mu)$ for 
$\mu\in I_0$.
Moreover, we have that
\begin{eqnarray*}
	&&\|e^{-(1-\Delta_x)t}(\gamma v_0^\mu-g^\mu(0)
	-\gamma v_0^{\mu_0}+g^{\mu_0}(0))\|_{\FF^2_T}\\
	&&\le C\|\gamma v_0^\mu-g^\mu(0)
	      -\gamma v_0^{\mu_0}+g^{\mu_0}(0)\|_{W^{2-3/p}_p(\R^n)}\\
	&&\le C\left(\|v_0^\mu-v_0^{\mu_0}\|_{\FF^4_T}
	      +\|g^\mu-g^{\mu_0}\|_{\FF^2_T}\right).
\end{eqnarray*}
Hence we obtain 
\begin{eqnarray*}
	\|v_1^\mu-v_1^{\mu_0}\|_{\EE^1_T}
	&\le& C\left(\|f^\mu-f^{\mu_0}\|_{\FF^1_T}
	      +\|v_0^\mu-v_0^{\mu_0}\|_{\FF^4_T}
	      +\|g^\mu-g^{\mu_0}\|_{\FF^2_T}\right)\nonumber\\
	&\to& 0\qquad\qquad (\mu\to\mu_0)
	\label{conv_v_1}
\end{eqnarray*}
by \eqref{conv_ass_inhom_1}, and (i) is proved.\\[2mm]
(ii) \ \ 
Note that $(v^\mu_2,\rho^\mu_2,\rho_{2,E}^\mu)$ is the solution
of \eqref{reduced}--\eqref{ABB}. According to 
Theorem~\ref{conv_homog_ini} it therefore suffices to prove
convergence for the corresponding data. To be precise,
it remains to show that
\begin{equation}\label{conv_red_g}
	\tg^\mu\to \tg^{\mu_0}
	\quad\mbox{in}\quad {_0\FF^2_T}
	\qquad (\mu\to\mu_0),
\end{equation}
where 
\[
	\tg^\mu=\sigma\Delta_x\rho^\mu_1-\del\partial_t\rho^\mu_1 
	-e^{-(1-\Delta_x)t}\zeta^\mu,
\]
and that
\begin{equation}\label{conv_red_h}
	\th^\mu\to \th^{\mu_0}
	\quad\mbox{in}\quad {_0\FF^3_T}
	\qquad (\mu\to\mu_0),
\end{equation}
where
\[
	\th^\mu=h^\mu-\partial_t\rho^\mu_1 
	-[\![c\gamma\partial_y(v^\mu_1-a(\mu)\rho^\mu_{1,E})]\!].
\]
First we estimate
\begin{eqnarray*}
	\|\th^\mu-\th^{\mu_0}\|_{\FF^3_T}
	&\le& C\left(
	      \|h^\mu-h^{\mu_0}\|_{\FF^3_T}
	      +\|v_1^\mu-v_1^{\mu_0}\|_{\EE^1_T}\right.\\
	&&    \left.\strut    + \|a(\mu)\rho_{1,E}^\mu
	      -a(\mu_0)\rho_{1,E}^{\mu_0}\|_{\EE^1_T}
	      +\|\rho_1^\mu-\rho_1^{\mu_0}\|_{\EE^2_T(0,0)}
	      \right),
\end{eqnarray*}
and we see that \eqref{conv_red_h} follows from
(i), \eqref{cont_cond_a}, and \eqref{conv_ass_inhom_1}.
For $\tg^\mu$ we have 
\begin{eqnarray}
	\|\tg^\mu-\tg^{\mu_0}\|_{\FF^2_T}
	&\le& C\left(
	      \|\del\rho_1^\mu-\del_0\rho_1^{\mu_0}
	      \|_{\EE^2_T(1,0)}
	      +\|\sigma\rho_1^\mu-\sigma_0\rho_1^{\mu_0}
	      \|_{\EE^2_T(0,1)}
	      \right.\nonumber\\
	&&    \left.\strut    
	      +\|\zeta^\mu-\zeta^{\mu_0}\|_{W^{2-3/p}_p(\R^n)}
	      \right).
	      \label{3456}
\end{eqnarray}
By employing the convergence assumptions also here we will prove
that each single term on the right hand side of
\eqref{3456} tends to zero for $\mu\to \mu_0$.
In view of \eqref{def_zeta}
and \eqref{conv_ass_inhom_1} the convergence of the third term
in \eqref{3456} is clear. 
The first two terms are more involved. In fact, this is the
point where assumption \eqref{conv_ass_inhom_3} enters.
In analogy to (i) we again distinguish the three cases
$\del_0=\sigma_0=0$,
and $\del_0>0,\sigma_0\ge 0$, and $\del_0=0,\sigma_0> 0$.\\[1mm]
The case $\del_0=\sigma_0=0$: \ 
Note that in this case condition \eqref{assum_ml_b} for
$\mu_0$ turns into
\begin{equation}\label{1244}
	\gamma v_0^{\mu_0}-g^{\mu_0}(0)=0.
\end{equation}
By using this fact, Lemma~\ref{ext_ext}, \eqref{assum_ml_b}
for $\mu$, and recalling that $q_0^\mu$ still denotes 
the function defined in \eqref{def_q} we obtain 
\begin{eqnarray*}
	&&\|\del\rho_1^\mu\|_{\EE^2_T(1,0)}\\
	&&\le C\left(\del\|\rho_0^\mu\|_{\rcsisu}
	      +\del\|q_0^\mu\|_{W^{2-3/p}_p(\R^n)}
	      \right)\\
	&&\le C\left(\del\|\rho_0^\mu\|_{\rcsisu}
	      +\|g^{\mu}(0)-\gamma v_0^{\mu}
	        -g^{\mu_0}(0)+\gamma v_0^{\mu_0}
		+\sigma\Delta_x\rho_0^\mu\|_{W^{2-3/p}_p(\R^n)}
	       \right)\\
	&&\le C\left((\del+\sigma)\|\rho_0^\mu\|_{\rcsisu}
	      +\|g^{\mu}-g^{\mu_0}\|_{\FF^2_T}
	      +\|v_0^{\mu}-v_0^{\mu_0}\|_{\FF^4_T}
	       \right).
\end{eqnarray*}
In view of \eqref{conv_ass_inhom_1} 
and \eqref{conv_ass_inhom_3} we conclude that
\[
	\|\del\rho_1^\mu\|_{\EE^2_T(1,0)}
	\to 0\qquad\quad(\mu\to\mu_0).
\]
For the second term in \eqref{3456}  Lemma~\ref{ext_ext}
yields
\begin{eqnarray*}
	\|\sigma\rho_1^\mu\|_{\EE^2_T(0,1)}
	&\le& C\left(\sigma\|\rho_0^\mu\|_{\rcsisu}
	      +\sigma\|q_0^\mu\|_{W^{2-6/p}_p(\R^n)}
	      \right)\\
	&\le& C\left(\sigma\|\rho_0^\mu\|_{\rcsisu}
	      +\sigma\|h^\mu(0)-[\![c\gamma\partial_y
	      v_0^\mu ]\!]\|_{W^{2-6/p}_p(\R^n)}
	       \right).
\end{eqnarray*}
Hence, if $\del>0$, it follows 
\begin{equation}\label{565}
	\|\sigma\rho_1^\mu\|_{\EE^2_T(0,1)}
	\to 0\qquad\quad(\mu\to\mu_0)
\end{equation}
by \eqref{conv_ass_inhom_2} and \eqref{conv_ass_inhom_3}.
If $\del=0$, we have $\sigma\Delta_x\rho_0^\mu
=\gamma v_0^\mu-g^\mu(0)$. This yields
\begin{eqnarray*}
	\|\sigma\rho_0^\mu\|_{\rcsisu}
	&\le& C\left(\sigma\|\rho_0^\mu\|_{\rcsi}
	      +\|\sigma\Delta_x\rho^\mu_0\|_{W^{2-3/p}_p(\R^n)}
	      \right)\\
	&\le& C\left(\sigma\|\rho_0^\mu\|_{\rcsi}
	       +\|g^{\mu}-g^{\mu_0}\|_{\FF^2_T}
	      +\|v_0^{\mu}-v_0^{\mu_0}\|_{\FF^4_T}
	       \right),
\end{eqnarray*}
where we used again \eqref{1244}. Observe that
$\|\rho_0^\mu\|_{\rcsi}$ is uniformly bounded in $\mu\in I_0$
by assumption \eqref{conv_ass_inhom_1}. Thus, in this case 
\eqref{565} is obtained as a 
consequence of \eqref{assum_ml_b}, \eqref{conv_ass_inhom_1},
and \eqref{conv_ass_inhom_2}.\\[1mm]
The case $\del_0=0,\sigma_0>0$: \ \ 
Here we have
\begin{equation}\label{1233}
	\gamma v_0^{\mu_0}-\sigma\Delta_x\rho_0^{\mu_0}
	=g^{\mu_0}(0).
\end{equation}
In a similar way as in the previous case we deduce, if $\del>0$,
that
\begin{equation}
\label{332}
\begin{split}
	\|\del\rho_1^\mu\|_{\EE^2_T(1,0)}
	\le&\, C\left(\del\|\rho_0^\mu\|_{\rcsisu}
	      +\|\sigma\rho_0^{\mu}-\sigma_0\rho_0^{\mu_0}
	      \|_{\rcsisu}\right.\\
	&    \left.\strut +\|g^{\mu}-g^{\mu_0}\|_{\FF^2_T}
	      +\|v_0^{\mu}-v_0^{\mu_0}\|_{\FF^4_T}
	       \right).
\end{split}
\end{equation}
Note that in the case $\sigma_0>0$ we also have that
\[
	\rho_0^\mu\to\rho_0^{\mu_0}
	\quad\mbox{in}\quad\FF^5_T(\mu_0)=\rcsisu
	\qquad\quad(\mu\to\mu_0).
\]
By this fact it is easy to see that the first two terms
in \eqref{332} vanish for $(\mu\to\mu_0)$,
whereas the convergence of the last two terms
follows again by \eqref{conv_ass_inhom_1}.
That the second term in \eqref{3456} tends to zero
here follows easily from the inequality
\[
	\|\sigma\rho_1^{\mu}-\sigma_0\rho_1^{\mu_0}
	      \|_{\EE^2_T(0,1)}
	\le \frac{\sigma}{\sigma_0}
	    \|\rho_1^{\mu}-\rho_1^{\mu_0}
	      \|_{\EE^2_T(0,\sigma_0)}
	    +\frac{\sigma-\sigma_0}{\sigma_0}
	     \|\rho_1^{\mu_0}
	      \|_{\EE^2_T(0,\sigma_0)}
\]
and the convergence of $\rho_1^\mu$ 
in $\EE^2_T(\mu_0)=\EE^2_T(0,\sigma_0)$ proved in (i).
Observe that the last argument also implies convergence for the case
$\del=0$, since then the first term in \eqref{3456} vanishes completely.
\\[1mm]
The case $\del_0>0,\sigma_0\ge 0$: \ \
Here the convergence of the first term in \eqref{3456}
follows completely analogous to the convergence of the
second term in the previous case.
If we suppose that also $\sigma_0>0$ the convergence
of the second term in \eqref{3456} follows by the 
same argument. In the case that $\sigma_0=0$ also here
an application of Lemma~\ref{ext_ext}
implies	
\begin{eqnarray*}
	\|\sigma\rho_1^\mu\|_{\EE^2_T(0,1)}
	&\le& C\left(\sigma\|\rho_0^\mu\|_{\rcsisu}
	      +\sigma\|q_0^\mu\|_{W^{2-6/p}_p(\R^n)}
	      \right)
\end{eqnarray*}
Moreover, we still have $\FF^5_T(\mu_0)=\rcsisu$, which implies
$\sigma\|\rho_0^\mu\|_{\rcsisu}\to 0$ for $\mu\to\mu_0$
by \eqref{conv_ass_inhom_1}. For the second term on the right
hand side of the above inequality note that
from the case $\del_0>0,\sigma\ge 0$ in (i) we know that
$q_0^\mu\to q_0^{\mu_0}$ in $W^{2-3/p}_p(\R^n)$. 
This implies that this term vanishes as well for $\mu\to\mu_0$.
Hence also in this case we have that
\[
	\sigma\|\rho_1^\mu\|_{\EE^2_T(0,1)}
	\to 0\qquad\quad (\mu\to\mu_0).
\]

The three cases together show that 
\[
	(0,\tg^\mu,\th^\mu)
	\to (0,\tg^{\mu_0},\th^{\mu_0})
	\quad\mbox{in}\quad{_0\FF_T}
	\qquad\quad (\mu\to\mu_0),
\]
and therefore Theorem~\ref{conv_homog_ini}
implies (ii).
\end{proof}


\section{Appendix}

The reduction of problem \eqref{LTS}-\eqref{2-rho_E} 
to the case of vanishing traces
in the proof of Theorem~\ref{main_lin} 
was based on the following two results.
Observe that the assertions in Lemma~\ref{lem_ext_1} follow
directly from the general trace result \cite[Theorem~4.5]{dss2008}.
However, for the sake of completeness and for a better understanding
of the proof of subsequent Lemma~\ref{ext_ext} we give its proof
here.
In the following we adopt the notation of Section~\ref{inhom_traces}.
\begin{lemma}
\label{lem_ext_1}
Let $1<p<\infty$, $T\in(0,\infty]$, and $J=(0,T)$.
\begin{itemize}
\item[(i)] For each $\eta_0\in\rcsisu$ there exists an extension
           \begin{equation*}
                \qquad\qquad\eta\in\EE^2_T(1,1)
           \end{equation*}
           such that $\eta_1(0)=\sigma_0$ and, if $p>3$, also that
           $\partial_t\eta_1(0)=0$.
\item[(ii)] Suppose $p>3/2$. Then for each $\eta_0\in\rcsisu$ and 
           $\eta_1\in W^{2-3/p}_p(\R^n)$ 
           there exists an extension $\eta\in \EE^2_T(1,1)$ 
           satisfying $\eta(0)=\eta_0$, 
	   $\partial_t\eta(0)=\eta_1$ and the estimate
\[
	\|\eta\|_{\EE^2_T(1,1)}
	\le C\left(\|\eta_0\|_{\rcsisu}
	+\|\eta_1\|_{W^{2-3/p}_p(\R^n)}\right).
\]
\item[(iii)] Suppose $p>3$. Then for each $\eta_0\in\rcsisu$ and 
           $\eta_1\in W^{2-6/p}_p(\R^n)$ 
           there exists an extension $\eta\in \EE^2_T(0,1)$ 
           satisfying $\eta(0)=\eta_0$, 
	   $\partial_t\eta(0)=\eta_1$ and the estimate
\[
	\|\eta\|_{\EE^2_T(0,1)}
	\le C\left(\|\eta_0\|_{\rcsisu}
	+\|\eta_1\|_{W^{2-6/p}_p(\R^n)}\right).
\]
\end{itemize}
\end{lemma}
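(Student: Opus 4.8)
The three assertions are temporal trace/extension results for anisotropic spaces over the cylinder $J\times\R^n$, so my plan is to first identify the temporal trace spaces by real interpolation and then write down an explicit continuous right inverse (coretraction) built from the bounded analytic semigroup generated by $A:=1-\Delta_x$ on $L_p(\R^n)$. I will repeatedly use $D(A^s)=W^{2s}_p(\R^n)$ and $(L_p,D(A^s))_{\theta,p}=W^{2s\theta}_p(\R^n)$ for $2s\theta\notin\N$.

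Recall that $\EE^2_T(1,1)=\rcssu$ is the two-term parabolic space of order $s:=2-1/2p$ associated with $A$, since its spatial component $W^{4-1/p}_p(\R^n)$ equals $D(A^s)$. The zeroth temporal trace then lands in $(L_p,D(A^s))_{1-1/(ps),p}=W^{4-3/p}_p=\rcsisu$, and once $s>1+1/p$ (i.e. $p>3/2$) the first trace lands in $(L_p,D(A^s))_{1-(1+1/p)/s,p}=W^{2-3/p}_p$, matching the data spaces in (ii). As coretraction I take $\eta(t):=e^{-tA}\eta_0+t\,e^{-tA}w$ with $w:=\eta_1+A\eta_0\in W^{2-3/p}_p$; a direct check gives $\eta(0)=\eta_0$ and $\partial_t\eta(0)=\eta_1$, so (i) is the special case $\eta_1=0$. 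Membership $\eta\in\EE^2_T(1,1)$ with the stated estimate is the standard continuity of the zeroth- and first-order semigroup coretractions on the respective trace spaces (the factor $t$ supplies exactly the regularity gain that compensates the lower trace order of $w$), together with $\|A\eta_0\|_{W^{2-3/p}_p}\le C\|\eta_0\|_{W^{4-3/p}_p}$. This part is routine.

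The substance is (iii), where $\EE^2_T(0,1)=\rcss$ is a genuine threefold intersection whose components carry different, $p$-dependent parabolic scalings, so it is not the two-term space of any single fractional power of $A$. Here the (middle, leading) pair already produces the zeroth trace $W^{4-3/p}_p$, while the first trace is pinned to $W^{2-6/p}_p$ only by using all three terms (the threshold $p>3$ is exactly the condition $3/2-1/2p>1+1/p$ guaranteeing that this first temporal trace exists); a naive order-$3$ extension by the semigroup of $(1-\Delta_x)^{3/2}$ would see merely $W^{1-3/p}_p$ for the first trace, which is strictly weaker for $p>3$. I will therefore read off the trace spaces from the general mixed-order trace theorem \cite[Theorem~4.5]{dss2008}, and construct $\eta$ as a sum of two pieces of different order: a principal part extending $\eta_0$ into the leading/middle scale, plus a correction $t\,e^{-t(1-\Delta_x)}\eta_1$ that both installs $\partial_t\eta(0)=\eta_1$ and, because $\eta_1\in W^{2-6/p}_p$ is a full order better than the principal scale requires, supplies the sharp temporal regularity $W^{3/2-1/2p}_p(J,L_p)$ demanded by the first component. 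I would then verify membership and the estimate componentwise against each of the three norms.

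The hard part is precisely this componentwise check in (iii): showing that one function lies simultaneously in three spaces of incompatible scaling, and tracking how the extra regularity of $\eta_1$ (the gap between $W^{2-6/p}_p$ and the $W^{1-3/p}_p$ suggested by the principal scale) is exactly what meets the middle term $W^{1-1/2p}_p(J,W^2_p)$ and the first term $W^{3/2-1/2p}_p(J,L_p)$. This is where the mixed-derivative theorem and the fine semigroup estimates on real-interpolation spaces are needed, and it is also what makes the appeal to \cite[Theorem~4.5]{dss2008} natural; the self-contained argument then essentially re-derives the relevant trace identities by interpolation and checks the explicit coretraction term by term.
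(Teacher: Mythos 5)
Parts (i) and (ii) of your argument are fine and essentially equivalent to the paper's: the paper uses the combinations $(2\e^{-tA}-\e^{-2tA})\eta_0$ and $(\e^{-tA}-\e^{-2tA})A^{-1}\eta_1$ with $A=1-\Delta_x$ where you use $\e^{-tA}\eta_0+t\e^{-tA}(\eta_1+A\eta_0)$; both are standard semigroup coretractions on the second-order parabolic scale, and your interpolation bookkeeping for the trace spaces is correct.

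The gap is in (iii), and it is exactly at the point you flag as "the hard part": the proposed correction $t\,\e^{-t(1-\Delta_x)}\eta_1$ does \emph{not} belong to $\EE^2_T(0,1)$ for general $\eta_1\in W^{2-6/p}_p(\R^n)$. Measure the three components of $\EE^2_T(0,1)$ in the second-order parabolic weight (one time derivative $=$ two space derivatives): $W^{3/2-1/2p}_p(J,L_p)$ has weight $3-1/p$, but $W^{1-1/2p}_p(J,W^2_p)$ and $L_p(J,W^{4-1/p}_p)$ both have weight $4-1/p$. The map $t\,\e^{-tA}$ applied to $B^{s}_{pp}$ data produces (sharply) weight $s+2+2/p$, so from $\eta_1\in W^{2-6/p}_p$ you reach only $4-4/p<4-1/p$. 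Concretely, for generic $\eta_1\in W^{2-6/p}_p$ one has $\|t\,\e^{-tA}\eta_1\|_{W^{4-1/p}_p}\sim t^{1-\frac12((4-1/p)-(2-6/p))}=t^{-5/(2p)}$ as $t\to0$, and $t^{-5/(2p)}\notin L_p(0,1)$ since $5/2>1$; so the leading spatial component already fails (and the middle one fails likewise). The extra regularity of $\eta_1$ over $W^{1-3/p}_p$ is $1-3/p$ derivatives, which is strictly less than the one full derivative separating the weight of the correction from the weight $4-1/p$ of the two demanding components, so no amount of fine semigroup estimates rescues this ansatz. The repair is the one the paper makes: build the correction on the \emph{fourth-order} scale, namely $\tilde\eta(t)=(\e^{-t(1-\Delta_x)^2}-\e^{-2t(1-\Delta_x)^2})(1-\Delta_x)^{-2}\eta_1$. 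Since $(1-\Delta_x)^{-2}\eta_1\in W^{6-6/p}_p$ is precisely the initial-trace space of the biharmonic maximal regularity class $\W^1_p(J,W^{2-2/p}_p)\cap L_p(J,W^{6-2/p}_p)$, the mixed derivative theorem places $\tilde\eta$ in all three components of $\EE^2_T(0,1)$ (in the fourth-order weight the first two components both sit at $6-2/p$ and the third at the strictly smaller $4-1/p$, which is why this single scale suffices), while $\tilde\eta(0)=0$ and $\partial_t\tilde\eta(0)=\eta_1$.
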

\begin{proof}
(i)
Let $1<p<\infty$. We claim that
\begin{equation}
\label{eta-1}
        \eta(t)
        :=(2\e^{-t(1-\Delta_x)}
          -\e^{-2t(1-\Delta_x)})\eta_0
\end{equation}  
satisfies the properties asserted in (i).
We have
\[
        \e^{-kt(1-\Delta_x)}\eta_0
        \in \W^1_p(J,W^{2-1/p}_p(\R^n))
        \cap L_p(J,W^{4-1/p}_p(\R^n))
\]
for $k=1,2$. It is a consequence of the mixed derivative theorem 
that the latter space is continuously embedded in 
$W^{1-1/2p}_p(J,\W^2_p(\R^n))$. This implies 
that
\[
        \partial_t\e^{-kt(1-\Delta_x)}\eta_0
        =   -k(1-\Delta_x)
            \e^{-kt(1-\Delta_x)}\eta_0
        \in W^{1-1/2p}_p(J,L_p(\R^n)).
\]
Consequently,
\[
        \eta\in W^{2-1/2p}_p(J,L_p(\R^n))
\]
and we have that
\[
	\|\eta\|_{W^{2-1/2p}_p(J,L_p(\R^n))}
	\le C\|\eta\|_{\W^1_p(J,W^{2-1/p}_p(\R^n))
        \cap L_p(J,W^{4-1/p}_p(\R^n))}.
\]
The maximal regularity of $(1-\Delta_x)$ on $W^{2-1/p}_p(\R^n)$
and the embedding
\[
	\EE^2_T(1,1)\hookrightarrow \W^1_p(J,W^{2-1/p}_p(\R^n))
\]
then yields 
\[
	\|\eta\|_{\EE^2_T(1,1)}
	\le C\|\eta_0\|_{\rcsisu}.
\]
Obviously $\eta(0)=\eta_0$. If $p>3$, the time trace
of $\partial_t\eta$ is well defined and we
also have $\partial_t\eta(0)=0$. This proves (i).
\smallskip\\
(ii)
Now suppose $p>3/2$. Here we first set
\begin{equation}
\label{eta-2}
        \teta(t):=
          (\e^{-t(1-\Delta_x)}-\e^{-2t(1-\Delta_x)})(1-\Delta_x)^{-1}
	  \eta_1,
\end{equation}
Then for $\eta_1\in W^{2-3/p}_p(\R^n)$ we have that
\[
        \e^{-kt(1-\Delta_x)}(1-\Delta_x)^{-1}\eta_1
        \in \W^1_p(J,W^{2-1/p}_p(\R^n))
            \cap L_p(J,W^{4-1/p}_p(\R^n))
\]
for $k=1,2$. 
By virtue of the embedding
\[
	\W^1_p(J,W^{2-1/p}_p(\R^n))
            \cap L_p(J,W^{4-1/p}_p(\R^n))
        \hookrightarrow \W^{1-1/2p}_p(J,W^2_p(\R^n))
\]
we obtain 	
\[
        \partial_t\e^{-kt(1-\Delta_x)}(1-\Delta_x)^{-1}\eta_1
        \in \W^{1-1/2p}_p(J,L_p(\R^n)),
\]
hence that
\[
        \e^{-kt(1-\Delta_x)}(1-\Delta_x)^{-1}\eta_1
        \in \W^{2-1/2p}_p(J,L_p(\R^n)).
\]
By the same arguments as in (i) we obtain the 
estimate 
\[
	\|\teta\|_{\EE^2_T(1,1)}
	\le C\|\eta_1\|_{W^{2-3/p}_p(\R^n)}.
\]
If $\bareta$ denotes the extension constructed in (i),
then 
\[
	\eta:=\bareta+\teta
\]
satisfies the regularity assertions in (ii).
That $\eta(0)=\eta_0$ and $\partial_t\eta(0)=\eta_1$
is obvious.
\smallskip\\
(iii)
Now we set
\begin{equation}
\label{eta-22}
        \teta(t):=
          (\e^{-t(1-\Delta_x)^2}-\e^{-2t(1-\Delta_x)^2})
	  (1-\Delta_x)^{-2}\eta_1.
\end{equation}
We have to check that $\e^{-kt(1-\Delta_x)^2}
(1-\Delta_x)^{-2}\eta_1\in\EE^2_T(0,1)$.
In view of $\eta_1\in W^{2-6/p}_p(\R^n)$ we have that
\[
        \e^{-kt(1-\Delta_x)^2}(1-\Delta_x)^{-2}\eta_1
        \in \W^1_p(J,W^{2-2/p}_p(\R^n))
            \cap L_p(J,W^{6-2/p}_p(\R^n)).
\]
From the embedding
\[
	\W^1_p(J,W^{2-2/p}_p(\R^n))
            \cap L_p(J,W^{6-2/p}_p(\R^n))
        \hookrightarrow \W^{1/2-1/2p}_p(J,W^4_p(\R^n))
\]
we infer 
\[
        \partial_t\e^{-kt(1-\Delta_x)^2}(1-\Delta_x)^{-2}\eta_1
        \in \W^{1/2-1/2p}_p(J,L_p(\R^n)),
\]
and therefore that
\[
        \e^{-kt(1-\Delta_x)}(1-\Delta_x)^{-2}\eta_1
        \in \W^{3/2-1/2p}_p(J,L_p(\R^n)).
\]
Then $\eta:=\bareta+\teta$ satisfies all
the assertions claimed in (iii), where $\bareta$ denotes again
the extension obtained in (i).
\end{proof}
Lemma~\ref{lem_ext_1} in conbination with \cite[Lemma~6.4]{PSS07} 
yields the following result which provides a simultaneous
extension for different regularity assumptions on the traces.
\begin{lemma}
\label{ext_ext}
Let $3<p<\infty$, $T\in(0,\infty]$, and $J=(0,T)$.
For $\eta_0$ and $\eta_1$ there exists an (simultaneous) extension
function $\eta$ such that $\eta(0)=\eta_0$, $\partial_t\eta(0)=\eta_1$,
and
\[
	\|\eta\|_{\EE^2_T(0,0)}
	\le C\left(\|\eta_0\|_{\rcsi}
	+\|\eta_1\|_{W^{1-3/p}_p(\R^n)}\right),
\]
if $(\eta_0,\eta_1)\in \rcsi\times W^{1-3/p}_p(\R^n)$,
\[
	\|\eta\|_{\EE^2_T(0,1)}
	\le C\left(\|\eta_0\|_{\rcsisu}
	+\|\eta_1\|_{W^{2-6/p}_p(\R^n)}\right),
\]
if $(\eta_0,\eta_1)\in \rcsisu\times W^{2-6/p}_p(\R^n)$,
and
\[
	\|\eta\|_{\EE^2_T(1,1)}
	\le C\left(\|\eta_0\|_{\rcsisu}
	+\|\eta_1\|_{W^{2-3/p}_p(\R^n)}\right),
\]
if $(\eta_0,\eta_1)\in \rcsisu\times W^{2-3/p}_p(\R^n)$,
with $C>0$ independent of $\eta_0$ and $\eta_1$.
\end{lemma}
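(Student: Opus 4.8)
The plan is to use linearity to split the extension across the two traces, writing $\eta=\bareta+\teta$ where $\bareta$ extends $(\eta_0,0)$ and $\teta$ extends $(0,\eta_1)$, and to exhibit in each case a \emph{single} linear operator (independent of $\del$ and $\sigma$) that is bounded in all three of the claimed regularity regimes at once. This simultaneity is the entire point of the lemma, and it is exactly the property that the convergence proof of Theorem~\ref{conv_main_lin} relies upon.

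First I would dispose of the $\eta_0$-part by taking the operator from Lemma~\ref{lem_ext_1}(i), namely $\bareta(t):=(2\e^{-t(1-\Delta_x)}-\e^{-2t(1-\Delta_x)})\eta_0$, which satisfies $\bareta(0)=\eta_0$ and $\partial_t\bareta(0)=0$ and is manifestly parameter-free. Lemma~\ref{lem_ext_1}(i) already supplies $\|\bareta\|_{\EE^2_T(1,1)}\le C\|\eta_0\|_{\rcsisu}$. The same analytic-semigroup reasoning, combined with the \mdt{} and now carried out with the target exponents of $\EE^2_T(0,0)$ and $\EE^2_T(0,1)$ (whose zeroth time-traces are $\rcsi$ and $\rcsisu$, respectively, so that the first-order semigroup $\e^{-t(1-\Delta_x)}$ still realizes the correct trace), yields the remaining two bounds $\|\bareta\|_{\EE^2_T(0,0)}\le C\|\eta_0\|_{\rcsi}$ and $\|\bareta\|_{\EE^2_T(0,1)}\le C\|\eta_0\|_{\rcsisu}$. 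Thus the one map $\eta_0\mapsto\bareta$ is simultaneously bounded as required.

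The crux is the $\eta_1$-part, and this is where I expect the genuine difficulty. One cannot simply glue together the two constructions of Lemma~\ref{lem_ext_1}: part (ii) extends the first time-trace using $\e^{-t(1-\Delta_x)}$ with the factor $(1-\Delta_x)^{-1}$ to reach $\EE^2_T(1,1)$, whereas part (iii) must switch to the \emph{second-order} semigroup $\e^{-t(1-\Delta_x)^2}$ with $(1-\Delta_x)^{-2}$ in order to reach $\EE^2_T(0,1)$. These are different operators, so their juxtaposition does not define one simultaneous extension of $\eta_1$. The resolution is \cite[Lemma~6.4]{PSS07}, which furnishes a single parameter-free operator $\teta=\teta(\eta_1)$ with $\teta(0)=0$ and $\partial_t\teta(0)=\eta_1$ that is bounded at once as $W^{1-3/p}_p(\R^n)\to\EE^2_T(0,0)$, as $W^{2-6/p}_p(\R^n)\to\EE^2_T(0,1)$, and as $W^{2-3/p}_p(\R^n)\to\EE^2_T(1,1)$. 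The explicit extensions in Lemma~\ref{lem_ext_1}(ii)--(iii) make transparent why such a common extension should exist (and recover two of the three bounds), but the technical verification that one and the same operator satisfies all three is precisely the cited lemma, which I would invoke as a black box.

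Finally I would set $\eta:=\bareta+\teta$ and read off $\eta(0)=\eta_0$ and $\partial_t\eta(0)=\eta_1$ from the corresponding properties of the two summands. In each of the three regimes the asserted estimate then follows by the triangle inequality from the matching bounds on $\bareta$ and $\teta$ established above, with a constant independent of $\eta_0$, $\eta_1$ (and of $\del,\sigma$). The only non-routine ingredient is the simultaneous boundedness of the $\eta_1$-extension supplied by \cite[Lemma~6.4]{PSS07}.
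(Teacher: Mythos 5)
Your overall strategy --- split $\eta=\bareta+\teta$ by linearity and exhibit one parameter-free operator per trace that is bounded in all three regimes at once --- is exactly the paper's, but both halves of your execution have problems, and the first is fatal. You take $\bareta(t)=(2\e^{-t(1-\Delta_x)}-\e^{-2t(1-\Delta_x)})\eta_0$ from Lemma~\ref{lem_ext_1}(i) and assert that ``the same analytic-semigroup reasoning'' yields $\|\bareta\|_{\EE^2_T(0,0)}\le C\|\eta_0\|_{\rcsi}$. This is false. The class $\EE^2_T(0,0)$ is not parabolically ($1\!:\!2$) anisotropic: its time regularity $3/2-1/2p$ is paired with space regularity $2-1/p$, so the sharp trace space for the \emph{first-order} semigroup $\e^{-t(1-\Delta_x)}$ to land in $W^{3/2-1/2p}_p(J,L_p(\R^n))$ is $W^{3-3/p}_p(\R^n)$, which is strictly smaller than $\rcsi$; for $\eta_0\in W^{2-2/p}_p(\R^n)\setminus W^{3-3/p}_p(\R^n)$ your $\bareta$ simply does not have the required time regularity. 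Realizing the correct trace at $t=0$ is not enough --- the smoothing rate of the semigroup must match the anisotropy of the target class, and for $\EE^2_T(0,0)$ that is the half-power semigroup $\e^{-t(1-\Delta_x)^{1/2}}$. The paper's remedy is to use the product $(2\e^{-t(1-\Delta_x)^{1/2}}-\e^{-2t(1-\Delta_x)^{1/2}})(2\e^{-t(1-\Delta_x)}-\e^{-2t(1-\Delta_x)})\eta_0$: in the $(0,0)$ regime the half-power factor does the lifting (this is what \cite[Lemma~6.4]{PSS07} supplies) while the first-power factor is merely a bounded operator on $W^s_p(J,W^r_p(\R^n))$, and in the $(1,1)$ and $(0,1)$ regimes the roles are reversed.

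For the $\eta_1$-part you correctly identify the obstruction --- parts (ii) and (iii) of Lemma~\ref{lem_ext_1} use different operators --- but you then outsource its resolution to \cite[Lemma~6.4]{PSS07}, claiming that lemma provides a single operator bounded simultaneously into $\EE^2_T(0,0)$, $\EE^2_T(0,1)$ and $\EE^2_T(1,1)$. It does not: that reference concerns the classical Stefan problem and only lifts traces into the class $\EE^2_T(0,0)$, so the very simultaneity that you call ``precisely the cited lemma'' is not in the citation. The paper manufactures it by the same product device as above: it takes $\e^{-t(1-\Delta_x)}\bigl(\e^{-t(1-\Delta_x)^2}-\e^{-2t(1-\Delta_x)^2}\bigr)(1-\Delta_x)^{-2}\eta_1$ and, in each regime, estimates the factors not responsible for the lifting as bounded operators, letting the remaining factor act as in Lemma~\ref{lem_ext_1}(ii), (iii) or \cite[Lemma~6.4]{PSS07}. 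Without this (or an equivalent explicit construction) your argument contains no simultaneous $\eta_1$-extension at all.
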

\begin{proof}
The idea for obtaining a simultaneous extension function as 
stated in the lemma is to employ a combination 
of the extension operators we used
in Lemma~\ref{lem_ext_1}. More precisely, we claim that
\begin{eqnarray*}
        \eta(t)
        &:=&\left(2\e^{-t(1-\Delta_x)^{1/2}}
          -\e^{-2t(1-\Delta_x)^{1/2}}\right)
	  \left(2\e^{-t(1-\Delta_x)}
          -\e^{-2t(1-\Delta_x)}\right)\eta_0\\
	  &&
	  \strut +\e^{-t(1-\Delta_x)}
	  \left(\e^{-t(1-\Delta_x)^2}
          -\e^{-2t(1-\Delta_x)^2}\right)(1-\Delta_x)^{-2}\eta_1
\end{eqnarray*}  
satisfies all the properties asserted.
Observe that $(e^{-\beta t(1-\Delta_x)^\alpha})_{t\ge 0}$
is a bounded $C_0$-semigroup and $(1-\Delta_x)^{-\alpha}$
is a bounded operator on $W^{r}_p(\R^n)$ for all 
$r,\alpha,\beta\ge 0$ and $1<p<\infty$. Hence,
\[
	e^{-\beta t(1-\Delta_x)^\alpha},
	(1-\Delta_x)^{-\alpha}\in \LL(W^s_p(J,W^r_p(\R^n)))
\]
for all $s,r,\alpha,\beta\ge 0$ and $1<p<\infty$.
If $(\eta_0,\eta_1)\in \rcsi\times W^{1-3/p}_p(\R^n)$,
we therfore may estimate
\begin{eqnarray*}
        \|\eta\|_{\EE^2_T(0,0)}
        &\le& C\left(\|(2\e^{-t(1-\Delta_x)^{1/2}}
          -\e^{-2t(1-\Delta_x)^{1/2}})\eta_0\|_{\EE^2_T(0,0)}\right.\\
	&&\left.\strut 
	+\|\e^{-t(1-\Delta_x)}\eta_1\|_{\EE^2_T(0,0)}\right).
\end{eqnarray*}  
By \cite[Lemma~6.4]{PSS07} the remaining 
extension operators are known to lift the traces into
the class $\EE^2_T(0,0)$, which implies
\[
	\|\eta\|_{\EE^2_T(0,0)}
	\le C\left(\|\eta_0\|_{\rcsi}
	+\|\eta_1\|_{W^{1-3/p}_p(\R^n)}\right).
\]
Hence the first estimate is proved.
If $(\eta_0,\eta_1)\in \rcsisu\times W^{2-6/p}_p(\R^n)$,
we interchange the roles of the semigroups in the
definition of $\eta$. In fact here we obtain as in
Lemma~\ref{lem_ext_1}~(iii), 
\begin{eqnarray*}
        \|\eta\|_{\EE^2_T(0,1)}
        &\le&C\left(\|2\e^{-t(1-\Delta_x)}
          -\e^{-2t(1-\Delta_x)})\eta_0\|\EE^2_T(0,1)\right.\\
	  &&
	  \left.\strut +\|(\e^{-t(1-\Delta_x)^2}
	  -\e^{-2t(1-\Delta_x)^2})
	  (1-\Delta_x)^{-2}\eta_1\|\EE^2_T(0,1)\right)\\
	  &\le&C\left(\|\eta_0\|_{\rcsisu}
	+\|\eta_1\|_{W^{2-6/p}_p(\R^n)}\right).
\end{eqnarray*}  
Analogously we proceed in the third case. Here we treat
the terms of type $\e^{-\beta t(1-\Delta_x)^{1/2}}$ in
front of $\eta_0$ and the terms 
$\e^{-\beta t(1-\Delta_x)^2}$ and $(1-\Delta_x)^{-1}$ 
in front of $\eta_1$ as bounded operators and gain
the desired regularity by the remaining operators as
in Lemma~\ref{lem_ext_1}~(ii).
A straight forward calculation also shows that
$\eta(0)=\eta_0$ and $\partial_t\eta(0)=\eta_1$.
\end{proof}

\medskip
Received xxxx 20xx; revised xxxx 20xx.
\medskip

\end{document}